
\documentclass{article}
\usepackage{amsmath}
\usepackage{amsfonts}
\usepackage{amssymb}
\usepackage{graphicx}
\usepackage{hyperref}
\usepackage{mathabx}

\setcounter{MaxMatrixCols}{10}

\graphicspath{ {\string~/Documents/Images/} }
\providecommand{\U}[1]{\protect\rule{.1in}{.1in}}
\newtheorem{theorem}{Theorem}[section]

\newtheorem{conjecture}{Conjecture}[section]
\newtheorem{corollary}{Corollary}[section]

\newtheorem{definition}{Definition}[section]
\newtheorem{example}{Example}[section]

\newtheorem{lemma}{Lemma}[section]

\newtheorem{proposition}{Proposition}[section]
\newtheorem{remark}{Remark}[section]

\newenvironment{proof}[1][Proof]{\textbf{#1.} }{\ \rule{0.5em}{0.5em}}
\setlength{\textwidth}{6.7in} \setlength{\oddsidemargin}{0.0in} \setlength{\textheight}{9.0in} \setlength{\topmargin}{-0.5in}
\input{tcilatex}
\begin{document}

\title{Edge-Transitive Graphs}
\author{Heather A. Newman \\
Department of Mathematics\\
Princeton University \and Hector Miranda \\
School of Mathematical Sciences\\
Rochester Institute of Technology \and Darren A. Narayan \\
School of Mathematical Sciences\\
Rochester Institute of Technology}
\maketitle

\begin{abstract}
A graph is said to be edge-transitive if its automorphism group acts
transitively on its edges. It is known that edge-transitive graphs are
either vertex-transitive or bipartite. In this paper we present a complete
classification of all connected edge-transitive graphs on less than or equal
to $20$ vertices. We then present a construction for an infinite family of
edge-transitive bipartite graphs, and use this construction to show that
there exists a non-trivial bipartite subgraph of $K_{m,n}$ that is connected
and edge-transitive whenever $gcd(m,n)>2$. Additionally, we investigate
necessary and sufficient conditions for edge transitivity of connected $%
(r,2) $ biregular subgraphs of $K_{m,n}$, as well as for uniqueness, and use
these results to address the case of $gcd(m,n)=2$. We then present infinite
families of edge-transitive graphs among vertex-transitive graphs, including
several classes of circulant graphs. In particular, we present necessary
conditions and sufficient conditions for edge-transitivity of certain
circulant graphs.
\end{abstract}

\section{Introduction}

A graph is vertex-transitive (edge-transitive) if its automorphism group
acts transitively on its vertex (edge) set. We note the alternative
definition given by Andersen et al. \cite{Andersen}.

\begin{theorem}[Andersen, Ding, Sabidussi, and Vestergaard]
A finite simple graph $G$ is edge-transitive if and only if $G-e_{1}\cong
G-e_{2}$ for all pairs of edges $e_{1}$ and $e_{2}$.
\end{theorem}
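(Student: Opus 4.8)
The plan is to prove the two directions separately, with the reverse implication carrying essentially all of the difficulty.

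For the forward direction, suppose $G$ is edge-transitive and let $e_1, e_2$ be any two edges. By definition there is an automorphism $\sigma \in \mathrm{Aut}(G)$ with $\sigma(e_1) = e_2$. Viewing $\sigma$ as a permutation of $V(G) = V(G - e_1) = V(G - e_2)$, I would check that a pair $\{x,y\} \neq e_1$ is an edge of $G - e_1$ if and only if $\{\sigma(x), \sigma(y)\} \neq e_2$ is an edge of $G - e_2$; this is immediate because $\sigma$ preserves adjacency in $G$ and maps the single deleted edge $e_1$ exactly onto the single deleted edge $e_2$. Hence $\sigma$ restricts to an isomorphism $G - e_1 \cong G - e_2$, and this works for every pair of edges.

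For the reverse direction, assume $G - e_1 \cong G - e_2$ for all pairs of edges, and fix two edges $e_1 = ab$ and $e_2 = cd$ together with an isomorphism $\phi \colon G - e_1 \to G - e_2$. The key observation is that $\phi$ is simply a bijection of $V(G)$ carrying the non-edges of $G - e_1$ onto the non-edges of $G - e_2$. The non-edges of $G - e_1$ are exactly the non-edges of $G$ together with the deleted pair $\{a,b\}$, and similarly for $G - e_2$ with $\{c,d\}$. If $\phi(\{a,b\}) = \{c,d\}$, then $\phi$ preserves adjacency in $G$ as well, since it matches the restored edge $ab$ to the restored edge $cd$; thus $\phi \in \mathrm{Aut}(G)$ and $\phi(e_1) = e_2$, giving edge-transitivity directly.

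The main obstacle is the remaining case, in which $\phi$ sends the special non-edge $\{a,b\}$ to some ordinary non-edge $\{a',b'\}$ of $G$ with $\{a',b'\} \neq \{c,d\}$; symmetrically $\{c,d\}$ is then the $\phi$-image of an ordinary non-edge of $G$. Here $\phi$ is not an automorphism of $G$, and a single isomorphism cannot be massaged into the automorphism we need, so one must exploit the full strength of the hypothesis that \emph{every} edge-deletion is isomorphic, not merely the chosen pair. My plan is to extract degree information: comparing degrees across $\phi$ shows that, in the generic case where $\{a',b'\}$ is disjoint from $\{c,d\}$, we have $\deg_G(a') = \deg_G(a) - 1$ and $\deg_G(b') = \deg_G(b) - 1$, so the edge-swap realized by $\phi$ strictly lowers the degrees at the moved endpoints.

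Because $G$ is finite, I would turn this into an extremal or descent argument: choosing $e_2$ to have endpoints of extremal degree and running the hypothesis over all competing edges should force the favorable case of the previous paragraph, since the degree-lowering behavior is incompatible with extremality. The delicate points I expect to fight are the bookkeeping when $\{a',b'\}$ and $\{c,d\}$ share a vertex, and verifying that equality of the degree sequences across all deletions genuinely pins down the unordered degree pairs $\{\deg_G x, \deg_G y\}$ of the edges tightly enough to drive the extremal step.
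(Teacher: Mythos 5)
First, note that the paper itself gives no proof of this statement: it is quoted verbatim from Andersen, Ding, Sabidussi, and Vestergaard \cite{Andersen}, where it is the main theorem of a substantial research paper. So there is no in-paper argument to compare against; your proposal has to stand on its own, and it does not. Your forward direction is correct and complete (an automorphism carrying $e_1$ to $e_2$ restricts to an isomorphism $G-e_1\cong G-e_2$), but that is the trivial half. The converse is the entire content of the theorem, and your plan for it has a genuine gap.

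The salvageable part of your converse is the degree bookkeeping: equality of the degree sequences of all $G-e$ does pin down a single unordered degree pair $\{r,s\}$ shared by every edge, and this \emph{finishes} the proof when $r=s$ (then $G$ is regular, so any isomorphism $G-e_1\to G-e_2$ must match the two deficient vertices to the two deficient vertices, hence sends the missing pair to the missing pair and extends to an automorphism) and also when $|r-s|\geq 2$ (the deficient degrees $r-1$, $s-1$ occur nowhere else, so the same forcing applies). But your extremal/descent step cannot close the remaining case $|r-s|=1$, and that case is exactly where the difficulty of the theorem lives. If $r=s+1$, then deleting $e_1=ab$ with $\deg a=r$ leaves $a$ with degree $s$, indistinguishable \emph{by degree} from every vertex of degree $s$; so no choice of $e_2$, extremal or otherwise, forces $\phi$ to send $\{a,b\}$ to $\{c,d\}$, and degree arguments are provably silent here. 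Moreover, your descent has no quantity that decreases: the relations you derive, $\deg_G(a')=\deg_G(a)-1$ and $\deg_G(b')=\deg_G(b)-1$, compare an \emph{edge} of $G$ with a \emph{non-edge} of $G$, which is perfectly compatible with $e_2$ being extremal among edges; and any graph you could pass to, such as $(G-e_2)+\phi(e_1)$, is isomorphic to $G$, so no isomorphism invariant (degree sequences included) can strictly drop along the iteration. Finally, pseudo-similar edges --- pairs with $G-e_1\cong G-e_2$ but lying in different orbits of $\mathrm{Aut}(G)$ --- do exist in finite graphs, so the passage from ``all deletions isomorphic'' to ``one edge orbit'' requires a genuinely global argument of the kind carried out in \cite{Andersen}; it is not recoverable from the local degree analysis you outline.
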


We also mention the following well-known result.

\begin{proposition}
\label{p1} If $G$ is an edge-transitive graph, then $G$ is either
vertex-transitive or bipartite; in the latter case, vertices in a given part
belong to the same orbit of the automorphism group of $G$ on vertices.
\end{proposition}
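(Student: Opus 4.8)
The plan is to analyze the orbits of the automorphism group $\mathrm{Aut}(G)$ acting on the vertex set $V(G)$ and to show that edge-transitivity forces there to be at most two such orbits among the non-isolated vertices. I would assume throughout that $G$ has at least one edge (and, since the paper's interest is in connected graphs, no isolated vertices; an isolated vertex lies in an orbit of its own and is simply excluded from the argument), disposing of the trivial edgeless case at the outset.

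First I would fix a reference edge $e_{0}=\{a,b\}$ and let $A$ and $B$ denote the orbits of $a$ and $b$ respectively under $\mathrm{Aut}(G)$. The first key step is to show that every non-isolated vertex lies in $A\cup B$: given any vertex $w$ incident to an edge $f$, edge-transitivity supplies an automorphism $\varphi$ with $\varphi(e_{0})=f$, so $w$ equals $\varphi(a)$ or $\varphi(b)$ and hence lies in $A$ or $B$. This immediately yields a dichotomy according to whether $A=B$ or $A\neq B$.

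In the case $A=B$ there is a single vertex-orbit, so $G$ is vertex-transitive. In the case $A\neq B$, the orbits are disjoint (distinct orbits of a group action partition the underlying set), and the goal is to prove that $G$ is bipartite with parts $A$ and $B$. The crucial step is ruling out edges internal to a part: if some edge had both endpoints in $A$, then mapping $e_{0}$ onto that edge by an automorphism $\psi$ would force $\psi(b)$, which must lie in $B$ since automorphisms preserve orbits, to lie in $A$ as well, contradicting $A\cap B=\varnothing$. The same argument excludes edges inside $B$. Hence every edge joins $A$ to $B$, so $G$ is bipartite, and by construction each part is a single orbit, which is precisely the final assertion.

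I expect the main subtlety to be bookkeeping rather than depth: the two facts being leveraged, namely that automorphisms send orbits to orbits and that distinct orbits are disjoint, are standard, and the entire argument hinges on applying edge-transitivity twice, once to cover all vertices by $A\cup B$ and once to exclude within-part edges. The only point requiring genuine care is the handling of isolated vertices and the edgeless case, which I would settle first so that the orbit argument applies cleanly to the remainder.
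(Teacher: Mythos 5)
Your proof is correct. The paper itself offers no proof of this proposition --- it is stated as a well-known result (the classical fact usually attributed to Dauber, appearing in Harary's \emph{Graph Theory}) --- so there is no internal argument to compare against. Your orbit-dichotomy argument is the standard one: fixing an edge $\{a,b\}$, covering all non-isolated vertices by the two orbits $A$ and $B$ via edge-transitivity, and then splitting on $A=B$ (vertex-transitive) versus $A\neq B$ (bipartite with parts equal to the two orbits, internal edges excluded because automorphisms preserve orbits). Your attention to the degenerate cases (edgeless graphs, isolated vertices) is appropriate, since the vertex-transitive conclusion in the case $A=B$ genuinely requires that no isolated vertices exist, and restricting to connected graphs, as the paper does throughout, settles this cleanly.
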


Given a graph $G$ we will denote its vertex set by $V(G)$ and edge set by $%
E(G)$. We will use $K_{n}$ to denote the complete graph with $n$ vertices,
and $K_{m,n}$ to denote the complete bipartite graph with $m$ vertices in
one part and $n$ in the other. The path on $n$ vertices will be denoted $%
P_{n}$ and the cycle on $n$ vertices with $C_{n}$. The complement of a graph 
$G$ will be denoted $\overline{G}$. We continue with series of definitions
with elementary graph theory properties.

\begin{definition}
A graph is regular if all of its vertices have the same degree. A
(connected) bipartite graph is said to be biregular if all vertices on the
same side of the bipartition all have the same degree. Particularly, we
refer to a bipartite graph with parts of size $m$ and $n$ as an $(r,s)$
biregular subgraph of $K_{m,n}$ if the $m$ vertices in the same part each
have degree $r$ and the $n$ vertices in the same part have degree $s$.
\end{definition}

It follows from Proposition \ref{p1} that bipartite edge-transitive graphs
are biregular.

\begin{definition}
Given a group $G$ and generating set $S$, a Cayley graph\ $\Gamma (G,S)$ is
a graph with vertex set $V(\Gamma )$ and edge set $E(\Gamma )=$ $\{\{x,y\}$
: $x,y$ in $V(\Gamma )$, there exists an integer $s$ in $S$ : $y=xs\}$.
\end{definition}

It is known that all Cayley graphs are vertex-transitive. Next we recall a
specialized class of Cayley graphs known as circulant graphs.

\begin{definition}
A circulant graph $C_{n}(L)$ is a graph on vertices $v_{1},v_{2},...,v_{n}$
where each $v_{i}$ is adjacent to $v_{(i+j)(\func{mod}n)}$ and $v_{(i-j)(%
\func{mod}n)}$ for each $j$ in a list $L$. Algebraically, circulant graphs
are Cayley graphs of finite cyclic groups. For a list $L$ containing $m$
items, we refer to $C_{n}(L)$ as an $m$-circulant. We say an edge $e$ is a
chord of length $k$ when $e=v_{i}v_{j}$, $|i-j|\equiv k$ mod $n$.
\end{definition}

In our next definition we present another family of vertex-transitive graphs.

\begin{definition}
A wreath graph, denoted $W(n,k)$, has n sets of $k$ vertices each, arranged
in a circle where every vertex in set $i$ is adjacent to every vertex in
bunches $i+1$ and $i-1$. More precisely, its vertex set is $\mathbb{Z}_{n}$ $%
\times$ $\mathbb{Z}_{k}$ and its edge set consists of all pairs of the form $%
\{(i,r),(i+1,s)\}$.
\end{definition}

It was proved by Onkey \cite{Onkey} that all wreath graphs are
edge-transitive. We next recall the definition of the line graph which we
use later to show that certain graph families are edge-transitive.

\begin{definition}
Given a graph $G$, the line graph $L(G)$ is a graph where $V(L(G))=E(G)$ and
two vertices in $V(L(G))$ are adjacent in $L(G)$ if and only if their
corresponding edges are incident in $G$.
\end{definition}

Finally we recall the operation of the Cartesian product of graphs.

\begin{definition}
Given two graphs $H$ and $K$, with vertex sets $V(H)$ and $V(K)$, the
Cartesian product $G=H\times K$ is a graph where $V(G)=\{(u_{i},v_{j})$
where $u_{i}\in V(H)$ and $v_{j}\in V(K)\}$, and $E(G)=\left\{
(u_{i},v_{j}),(u_{k},v_{l})\right\} $ if and only if $i=k$ and $v_{j}$ and $%
v_{l}$ are adjacent in $K$ or $j=l$ and $u_{i}$ and $u_{k}$ are adjacent in $%
H$.
\end{definition}

The properties vertex-transitive and edge-transitive are distinct. This is
clear with the following examples:

\begin{itemize}
\item $K_{n},$ $n\geq2$ is both vertex-transitive and edge-transitive.

\item $C_{n}(1,2)$, $n\geq6$ is vertex-transitive, but not edge-transitive.

\item $K_{1,n-1}$ is not vertex-transitive, but is edge-transitive.

\item $P_{n}$, $n\geq4$ is neither vertex-transitive nor edge-transitive.
\end{itemize}

However the two properties are linked, as evident from the following
proposition following from the results of Whitney \cite{Whitney} and
Sabidussi \cite{Sabidussi}.

\begin{proposition}
A connected graph is edge-transitive if and only if its line graph is
vertex-transitive.
\end{proposition}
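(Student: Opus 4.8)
The plan is to prove both directions by constructing an explicit bijection between the relevant symmetry groups, leaning on the classical Whitney and Sabidussi results. The key object is the natural map that sends an automorphism $\phi$ of a connected graph $G$ to the induced permutation $L(\phi)$ on $E(G) = V(L(G))$, defined by $L(\phi)(\{u,v\}) = \{\phi(u),\phi(v)\}$. I would first verify that $L(\phi)$ is a well-defined automorphism of $L(G)$: since $\phi$ preserves incidence of edges in $G$ (two edges share a vertex iff their images do), the map $L(\phi)$ preserves adjacency in $L(G)$. This gives a group homomorphism $\mathrm{Aut}(G) \to \mathrm{Aut}(L(G))$.

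For the forward direction, suppose $G$ is edge-transitive. Given any two vertices $e_1, e_2 \in V(L(G))$, viewed as edges of $G$, edge-transitivity supplies an automorphism $\phi \in \mathrm{Aut}(G)$ with $\phi(e_1) = e_2$ as edges; then $L(\phi)$ is an automorphism of $L(G)$ carrying $e_1$ to $e_2$, so $L(G)$ is vertex-transitive. This direction is essentially immediate from the definitions and requires no appeal to Whitney.

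The converse is where the two cited theorems do the real work, and I expect this to be the main obstacle. Whitney's theorem states that for connected graphs, the only pair of graphs with isomorphic line graphs that are themselves non-isomorphic is $K_3$ and $K_{1,3}$; more precisely, Whitney shows that apart from this exception, every isomorphism $L(G) \to L(H)$ of line graphs is induced by an isomorphism $G \to H$. Sabidussi's contribution is to promote this to a statement about automorphism groups: for a connected graph $G$ other than the exceptional small cases, the homomorphism $\mathrm{Aut}(G) \to \mathrm{Aut}(L(G))$ above is an isomorphism. Granting this, I would argue as follows. Suppose $L(G)$ is vertex-transitive, and let $e_1 = \{a,b\}$ and $e_2 = \{c,d\}$ be any two edges of $G$; there is $\psi \in \mathrm{Aut}(L(G))$ with $\psi(e_1) = e_2$. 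By the Whitney--Sabidussi correspondence, $\psi = L(\phi)$ for some $\phi \in \mathrm{Aut}(G)$, whence $\{\phi(a),\phi(b)\} = \{c,d\}$, so $\phi$ carries $e_1$ to $e_2$ and $G$ is edge-transitive.

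The genuine subtlety, and the step I would handle most carefully, is the exceptional case $K_3 \cong L(K_{1,3})$ where the surjectivity of $\mathrm{Aut}(G) \to \mathrm{Aut}(L(G))$ can fail: here $\mathrm{Aut}(K_{1,3}) \cong S_3$ while $\mathrm{Aut}(L(K_{1,3})) = \mathrm{Aut}(K_3) \cong S_3$ as well, so one should check directly that the small graphs $K_3$, $K_{1,3}$, and any other low-order exceptions are simultaneously edge-transitive and have vertex-transitive line graphs, so the equivalence still holds for them by inspection rather than by the general correspondence. I would close by remarking that connectivity is essential throughout, since Whitney's theorem is stated for connected graphs, and that the hypothesis rules out the disconnected counterexamples to the line-graph reconstruction.
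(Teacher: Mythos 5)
Your overall route is the intended one --- the paper itself gives no proof, stating the proposition as a direct consequence of the Whitney and Sabidussi results, exactly the two theorems you invoke --- and your forward direction is complete and correct. The gap is in the converse, specifically in your identification of the exceptional cases. You place the exception at $K_3 \cong L(K_{1,3})$ and say surjectivity of $\mathrm{Aut}(G) \to \mathrm{Aut}(L(G))$ ``can fail'' there; in fact, for both $K_3$ and $K_{1,3}$ this map is an isomorphism (both groups are $S_3$ and the map is injective), and the Whitney exception for that pair concerns reconstruction --- two non-isomorphic graphs sharing a line graph --- not automorphisms. The genuine surjectivity failures among connected graphs are $K_4$, the diamond $K_4$ minus an edge, and the paw (a triangle with a pendant edge). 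Of these, $K_4$ is the one that actually breaks your argument as written: $L(K_4)$ is the octahedron $K_{2,2,2}$, which is vertex-transitive with $48$ automorphisms, only $24$ of which are induced from $\mathrm{Aut}(K_4) \cong S_4$, so the step ``$\psi = L(\phi)$ for some $\phi \in \mathrm{Aut}(G)$'' is false for suitable $\psi$ carrying one vertex of $L(K_4)$ to another. The conclusion still holds there ($K_4$ is edge-transitive), but only by direct verification, which your vague clause ``any other low-order exceptions'' does not pin down. The diamond and the paw cause no trouble, since their line graphs (the wheel on five vertices and the diamond, respectively) are not vertex-transitive, so the hypothesis of the converse is never met for them.

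The clean repair is to quote Whitney's theorem in its sharp form --- every isomorphism $L(G) \to L(H)$ with $G, H$ connected and on at least five vertices is induced by an isomorphism $G \to H$ --- run your argument verbatim for all connected $G$ on at least five vertices, and then verify the biconditional by inspection for the finitely many connected graphs on at most four vertices ($K_1$, $K_2$, $P_3$, $P_4$, $K_3$, $K_{1,3}$, $C_4$, the paw, the diamond, and $K_4$). With that enumeration made explicit, your proof is complete and agrees with the derivation the paper intends.
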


Note, however, that a graph may not be the line graph of some original
graph. For example, $K_{1,3}\times C_{4}$ is vertex-transitive, but it
follows by a theorem of Beineke \cite{Beineke} that this graph is not the
line graph of some graph.

We used the databases from Brendan McKay \cite{McKay} to obtain all
connected edge-transitive graphs on $20$ vertices or less. We then reported
the number of edge-transitive graphs up to $20$ to the Online Encyclopedia
of Integer Sequences, and they are listed under sequence \#A095424. The full
classification of these graphs is given in the appendix of this paper. We
can extrapolate much from this data and these results are presented in this
paper. It was recently brought to our attention that Marston Conder and
Gabriel Verret independently determined the edge-sets of the connected
edge-transitive bipartite graphs on up to 63 vertices \cite{Conder} using
the Magma system, and a complete list of all connected edge-transitive
graphs on up to 47 vertices \cite{Conder47} with their edge sets \cite%
{Conder472}. These graphs do not appear to be named. In our paper, we
carefully named the graphs, allowing us to generalize some cases to infinite
families of graphs.

We note the following graph families are edge-transitive: $K_{n}$, $n\geq2$, 
$C_{n}$, $n\geq3$, $K_{n,n}$ minus a perfect matching, $K_{2n}$ minus a
perfect matching, and all complete bipartite graphs $K_{t,n-t}$, $1\leq
t\leq\left\lfloor \frac{n}{2}\right\rfloor $. Wreaths \cite{Onkey} and
Kneser graphs \cite{Royle} are also edge-transitive. Besides these
predictable and apparent cases, we can identify other infinite families of
edge-transitive graphs, using the data up through 20 vertices.

Recalling that edge-transitive graphs are either vertex-transitive or
biregular bipartite, we have the following motivating questions:

\begin{itemize}
\item For which values of $m,n,r,s$ such that $mr=ns$ does there exist a
(connected) $(r,s)$ biregular subgraph of $K_{m,n}$ that is edge-transitive?
For example, why do there exist \textit{two} edge-transitive and connected $%
(4,2)$ biregular subgraphs of $K_{5,10}$, but no edge-transitive and
connected $(5,2)$ biregular subgraph of $K_{4,10}$? (These graphs are found
in the appendix).

\item Can we identify infinite classes of edge-transitive graphs among
vertex-transitive graphs, particularly circulants? Furthermore, can we
identify necessary and sufficient conditions for edge-transitivity of
certain circulants?
\end{itemize}

\section{Edge-Transitive, Connected Bipartite Graphs}

We begin by addressing the first motivating question from the previous
section. Given positive integers $m$ and $n$, we first describe
formulaically which values of $r$ and $s$ are possible for an $(r,s)$
biregular subgraph of $K_{m,n}$. Note that if $gcd(m,n)=1$, the only
biregular subgraph of $K_{m,n}$ is $K_{m,n}$.

\begin{proposition}
An $(r,s)$ biregular subgraph of $K_{m,n}$ satisfies the following: 
\begin{equation*}
mr=ns
\end{equation*}%
\begin{equation*}
r=\frac{n}{gcd(m,n)}k,k=1,2,\dots ,gcd(m,n)
\end{equation*}
\end{proposition}

\begin{proof}
We know that $s=\frac{mr}{n}=\frac{\frac{m}{gcd(m,n)}}{\frac{n}{gcd(m,n)}}r$
and since $gcd\left( \frac{m}{gcd(m,n)},\frac{n}{gcd(m,n)}\right) =1$, $r$
is a multiple of $\frac{n}{gcd(m,n)}$ (and is less than or equal to $n$).
\end{proof}

\begin{corollary}
If $gcd(m,n)=2$, there are only two possible pairs $(r,s)$, namely, $%
(r,s)=\left( \frac{n}{2},\frac{m}{2}\right) $ and $(r,s)=(n,m)$. The latter
case is the complete bipartite graph $K_{m,n}$.
\end{corollary}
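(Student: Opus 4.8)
The plan is to apply the preceding Proposition directly, specializing the parameter $gcd(m,n)$ to the value $2$. The Proposition tells us that any $(r,s)$ biregular subgraph of $K_{m,n}$ must satisfy $r=\frac{n}{gcd(m,n)}k$ for some $k\in\{1,2,\dots,gcd(m,n)\}$, together with the relation $mr=ns$. Setting $gcd(m,n)=2$ collapses the range of $k$ to just the two values $k=1$ and $k=2$, so there can be at most two admissible pairs $(r,s)$.

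First I would compute $r$ in each of the two cases. For $k=1$ the formula gives $r=\frac{n}{2}$, and for $k=2$ it gives $r=\frac{n}{2}\cdot 2=n$. Next I would recover $s$ in each case from the identity $mr=ns$, equivalently $s=\frac{mr}{n}$. Substituting $r=\frac{n}{2}$ yields $s=\frac{m}{2}$, while substituting $r=n$ yields $s=m$. This produces exactly the two claimed pairs $(r,s)=\left(\frac{n}{2},\frac{m}{2}\right)$ and $(r,s)=(n,m)$.

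Finally I would interpret the second pair graph-theoretically. When $r=n$, every vertex in the part of size $m$ is adjacent to all $n$ vertices of the opposite part, so the subgraph already contains every edge of $K_{m,n}$; equivalently, $s=m$ forces each vertex in the part of size $n$ to be joined to all $m$ vertices opposite it. Either observation identifies this case as the complete bipartite graph $K_{m,n}$ itself.

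There is no substantive obstacle here: the statement is an immediate arithmetic specialization of the Proposition, and the only point worth spelling out is that $gcd(m,n)=2$ is precisely what forces the divisor $\frac{n}{gcd(m,n)}=\frac{n}{2}$ to admit only the two multipliers $k=1,2$, with the extreme case $k=gcd(m,n)$ always recovering the full complete bipartite graph.
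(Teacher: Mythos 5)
Your proposal is correct and matches the paper's intent exactly: the paper states this corollary without a separate proof precisely because it is the immediate specialization of the preceding Proposition to $\gcd(m,n)=2$, which is what you carry out (taking $k=1,2$ to get $r=\frac{n}{2}$ or $r=n$, recovering $s$ from $mr=ns$, and noting that $r=n$, $s=m$ gives all of $K_{m,n}$). No gaps and no meaningful deviation from the paper's route.
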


We now introduce a construction for generating non-trivial edge-transitive
(connected) bipartite subgraphs of $K_{m,n}$ for $gcd(m,n)>2$. This
construction involves a process of extending a non-trivial edge-transitive
(connected) bipartite graph to a larger one, which we describe in the
following lemma.

\begin{lemma}
Let $G$ be an edge-transitive (connected) $(r,s)$ biregular subgraph of $%
K_{m,n}$. Then, for any positive integers $a, b$, $G$ can be extended to an
edge-transitive (connected) $(ra,sb)$ biregular subgraph of $K_{mb, na}$.
\end{lemma}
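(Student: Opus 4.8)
The plan is to build the larger graph as a "blow-up" or tensor-style construction from $G$, replacing each vertex on one side by $a$ copies and each vertex on the other side by $b$ copies in a way that multiplies the degrees appropriately. Concretely, label the parts of $G$ as $X = \{x_1,\dots,x_m\}$ (degree $r$) and $Y = \{y_1,\dots,y_n\}$ (degree $s$). To reach $K_{mb,na}$ I would replace each $x_i$ by $b$ new vertices $x_i^{(1)},\dots,x_i^{(b)}$ and each $y_j$ by $a$ new vertices $y_j^{(1)},\dots,y_j^{(a)}$, so the first part has $mb$ vertices and the second has $na$ vertices. The natural edge rule is to join $x_i^{(p)}$ to $y_j^{(q)}$ exactly when $x_i y_j \in E(G)$, for all choices of $p \in \{1,\dots,b\}$ and $q \in \{1,\dots,a\}$. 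Then each $x_i^{(p)}$ has degree $ra$ (its $r$ original neighbors, each blown up into $a$ copies) and each $y_j^{(q)}$ has degree $sb$, giving exactly an $(ra,sb)$ biregular subgraph of $K_{mb,na}$; connectivity is inherited since the blow-up of a connected bipartite graph with all copy-classes nonempty remains connected.

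The core of the argument is edge-transitivity, and here the key idea is to lift automorphisms of $G$ and to supply "copy-permuting" automorphisms of the blow-up. First I would observe that any automorphism $\sigma$ of $G$ induces an automorphism $\tilde\sigma$ of the new graph by acting on indices and fixing the superscripts (so $x_i^{(p)} \mapsto \sigma(x_i)^{(p)}$, and similarly on $Y$); one must check $\sigma$ respects the bipartition, which is guaranteed by Proposition \ref{p1} once we note that in the edge-transitive bipartite case each part is a union of vertex-orbits. Second, I would introduce the automorphisms that independently permute the $b$ copies of each $x_i$ and the $a$ copies of each $y_j$: any choice of permutations $\pi_i \in S_b$ (one per $x_i$) and $\rho_j \in S_a$ (one per $y_j$) sending $x_i^{(p)}\mapsto x_i^{(\pi_i(p))}$ and $y_j^{(q)}\mapsto y_j^{(\rho_j(q))}$ preserves adjacency, because the edge rule depends only on the indices $i,j$ and not on the superscripts. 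These two families of symmetries together generate a large automorphism group.

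With these symmetries in hand, edge-transitivity follows in two moves. Given two edges $e = x_i^{(p)} y_j^{(q)}$ and $e' = x_{i'}^{(p')} y_{j'}^{(q')}$, the edge rule forces $x_i y_j$ and $x_{i'}y_{j'}$ to be edges of $G$; by edge-transitivity of $G$ there is an automorphism $\sigma$ of $G$ with $\sigma(x_i)=x_{i'}$ and $\sigma(y_j)=y_{j'}$, and its lift $\tilde\sigma$ carries $e$ to the edge $x_{i'}^{(p)}y_{j'}^{(q)}$. A copy-permuting automorphism then adjusts the superscripts, sending $x_{i'}^{(p)}\mapsto x_{i'}^{(p')}$ and $y_{j'}^{(q)}\mapsto y_{j'}^{(q')}$, which maps the intermediate edge onto $e'$. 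Composing gives an automorphism taking $e$ to $e'$, as required.

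I expect the main obstacle to be the bookkeeping around how $\sigma$ acts on the bipartition when the two parts are not interchangeable: if $G$ has an automorphism swapping the two sides, the lift must be set up to respect the (possibly different) blow-up factors $a$ and $b$ on the two sides, and edge-transitivity of $G$ might pair an edge via an automorphism that reverses orientation of the bipartition. The clean fix is to note that when $G$ is vertex-transitive-on-each-part but the parts have different sizes the reversing case cannot arise, and when it can arise (e.g. $m=n$, $r=s$) one either takes $a=b$ or restricts to the bipartition-preserving subgroup, which by Proposition \ref{p1} still acts transitively on edges up to the side-swap — so the argument goes through after handling this symmetry-type carefully. Verifying that the lift is well-defined and that the composite genuinely realizes every edge-to-edge map is the step that demands the most care; the degree count and connectivity are routine.
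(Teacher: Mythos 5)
Your construction is exactly the paper's: the paper builds an $(r,2s)$ biregular subgraph of $K_{2m,n}$ by giving each left vertex a duplicate with the same neighborhood and then iterates, which yields precisely your blow-up in which $x_i^{(p)}\sim y_j^{(q)}$ if and only if $x_iy_j\in E(G)$. The difference is in the verification of edge-transitivity: the paper disposes of it in one sentence (``Since $G$ is edge-transitive, $H$ is edge-transitive by construction''), whereas you exhibit the actual automorphisms (lifts of automorphisms of $G$ composed with copy-permuting maps), and you correctly isolate the one point where this can fail: when every automorphism of $G$ carrying the edge $x_iy_j$ to $x_{i'}y_{j'}$ swaps the two sides. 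Such an automorphism has no lift once $a\neq b$, because the two sides of the blow-up then have different copy-multiplicities (and different degrees, so no automorphism of the blow-up can interchange its sides).

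The gap is in your repair of that case. First, ``take $a=b$'' is not available: $a$ and $b$ are arbitrary given integers, so the $a\neq b$ case must be proved. Second, the assertion that the bipartition-preserving subgroup ``still acts transitively on edges'' does not follow from Proposition \ref{p1}, which speaks only about vertex orbits of the full automorphism group; and the assertion is false in general. For a connected bipartite edge-transitive $G$, the side-preserving subgroup is edge-transitive if and only if $G$ is not half-arc-transitive: if $G$ is arc-transitive, the automorphism taking the arc $(x_i,y_j)$ to the arc $(x_{i'},y_{j'})$ automatically preserves sides, but if $G$ is vertex- and edge- but not arc-transitive, the side-preserving subgroup (of index two) has exactly two orbits on edges. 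Bipartite half-arc-transitive graphs are known to exist, and for such a $G$ (these have no two vertices with identical neighborhoods) every automorphism of the $a\neq b$ blow-up preserves sides and hence induces a side-preserving automorphism of $G$, so the blow-up has two edge orbits and is not edge-transitive: in this exceptional case the construction itself fails, not merely your bookkeeping. In fairness, the same hole sits inside the paper's one-line justification, and both arguments are complete in the only situation the paper actually uses (Theorem \ref{21} applies the lemma to $K_{l,l}$ minus a perfect matching, which is arc-transitive, so side-preserving automorphisms suffice there). But as a proof of the lemma as stated, for arbitrary edge-transitive $G$ and arbitrary $a,b$, your argument has a genuine unfilled case, and it is one that cannot be filled by this construction.
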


\begin{proof}
It suffices to show that, letting $G$ be a (connected) edge-transitive $%
(r,s) $ biregular subgraph of $K_{m,n}$, we can build a (connected)
edge-transitive graph $H$ that is an $(r,2s)$ biregular subgraph of $%
K_{2m,n} $. Let $G$ consist of partite sets $A,B$ where $A=\{a_{1},a_{2},%
\dots ,a_{m}\}$ and $B=\{b_{1},b_{2},\dots ,b_{n}\}$. Now create the set $%
A^{\prime }=\{a_{1},a_{2},\dots ,a_{m},a_{1}^{\prime },a_{2}^{\prime },\dots
,a_{m}^{\prime }\}$ and create a graph $H$ with partite sets $A^{\prime }$
and $B$ as follows. For each $a_{i}$, let $N_{H}(a_{i})=N_{G}(a_{i})$. For
each $a_{i}^{\prime }$, let $N_{H}(a_{i}^{\prime })=N_{G}(a_{i})$. Then by
construction, $H$ is a (connected) $(r,2s)$ biregular subgraph of $K_{2m,n}$%
. Since $G$ is edge-transitive, $H$ is edge-transitive by construction.
\end{proof}

\begin{theorem}
\label{21} Let $gcd(m,n) > 2$. Then there exists an edge-transitive
(connected) subgraph of $K_{m,n}$ which is not complete.
\end{theorem}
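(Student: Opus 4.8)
The plan is to obtain the required graph by feeding a single sparse base case into the extension Lemma proved above. Write $d = \gcd(m,n)$ and factor $m = dm'$, $n = dn'$ with $\gcd(m',n') = 1$. The whole argument will then consist of choosing the base graph, invoking the Lemma with the right parameters, and a one-line degree comparison to confirm non-completeness.

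First I would take the base graph to be the even cycle $C_{2d}$. As noted in the introduction, every cycle $C_k$ with $k \geq 3$ is connected and edge-transitive, and $C_{2d}$ is precisely a $(2,2)$ biregular subgraph of $K_{d,d}$ (it is bipartite with parts of size $d$ and is $2$-regular). Since $d > 2$ we have $2 < d$, so $C_{2d}$ is already a proper, non-complete subgraph of $K_{d,d}$; this degree inequality $2 < d$ is exactly where the hypothesis $\gcd(m,n) > 2$ enters, and it is the reason the construction fails at $d = 2$, where $C_4 = K_{2,2}$ is complete.

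Next I would apply the Lemma to $C_{2d}$ with parameters $a = n'$ and $b = m'$. The Lemma extends a connected edge-transitive $(r,s)$ biregular subgraph of $K_{p,q}$ to a connected edge-transitive $(ra, sb)$ biregular subgraph of $K_{pb, qa}$; taking $(r,s) = (2,2)$ and $(p,q) = (d,d)$ yields a connected edge-transitive $(2n', 2m')$ biregular subgraph of $K_{dm', dn'} = K_{m,n}$. Because the Lemma preserves both connectedness and edge-transitivity, the output requires no further verification of transitivity, and matching the parameters is the only bookkeeping step: I must send $b = m'$ and $a = n'$ so that the ambient graph is $K_{m,n}$ and not some other complete bipartite graph.

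Finally I would check non-completeness by comparing degrees. In the extended graph each of the $m = dm'$ vertices of the first part has degree $2n'$, whereas its degree in $K_{m,n}$ would be $n = dn'$; since $d > 2$ forces $2n' < dn'$, every such vertex is missing an edge and the graph is not complete. The main (and essentially only) delicate point is this matching of sides together with the observation that the strict degree drop below the complete degree holds precisely when $d > 2$. As a sanity check, applied to $K_{5,10}$ (where $d = 5$, $m' = 1$, $n' = 2$) the construction produces a $(4,2)$ biregular subgraph, consistent with the example cited in the introduction, and everything else follows immediately from the Lemma.
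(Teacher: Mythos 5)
Your proof is correct, and it runs on the same engine as the paper's: seed the extension lemma with a small non-complete, connected, edge-transitive biregular subgraph of $K_{d,d}$, where $d=\gcd(m,n)$. The difference lies in the seed and the bookkeeping. The paper seeds with $K_{d,d}$ minus a perfect matching and splits into two cases ($m\mid n$, handled by iterating the one-step extension $k-1$ times; $m\nmid n$, handled by extending each side repeatedly), whereas you seed with the cycle $C_{2d}$, viewed as a $(2,2)$ biregular subgraph of $K_{d,d}$, and invoke the lemma exactly once with the general parameters $a=n'$, $b=m'$, eliminating the case distinction entirely. Both seeds degenerate at $d=2$ for parallel reasons: your $C_4$ equals $K_{2,2}$, which is complete, while the paper's $K_{2,2}$ minus a perfect matching is disconnected; and at $d=3$ the two seeds coincide, since $K_{3,3}$ minus a perfect matching is $C_6$. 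Your version is arguably cleaner, since it uses the lemma in its full stated $(a,b)$ generality rather than as a repeated doubling, and your explicit degree comparison $2n' < dn' = n$ isolates precisely where the hypothesis $\gcd(m,n)>2$ is used; the paper's choice of seed, on the other hand, produces denser subgraphs (degree $d-1$ rather than $2$ before extension), which is immaterial for the existence statement being proved.
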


\begin{proof}
We appeal to the construction in the preceding lemma, and consider the
following two cases:

\noindent \newline
\underline{Case 1:} $m|n$ \newline
Then $n=mk$ for some positive integer $k$. Let $G$ be the graph that results
from removing a perfect matching from $K_{m,m}$. Then $G$ is connected,
biregular, and edge-transitive but not complete. Repeating the construction
in the lemma $k-1$ times, we obtain a subgraph of $K_{m,mk}=K_{m,n}$ that is
connected, biregular, edge-transitive, and not complete.

\noindent \newline
\underline{Case 2:} $m\nmid n$ \newline
Let $l=gcd(m,n)$ and $m=k_{1}l,n=k_{2}l$. Let $G$ be the graph that results
from removing a perfect matching from $K_{l,l}$. Then $G$ is connected,
biregular, and edge-transitive but not complete. Then, following the
construction in the lemma, increase the left partite set by $l$ vertices $%
k_{1}-1$ times and the right partite set by $l$ vertices $k_{2}-1$ times.
The resulting graph will be a connected, biregular, and edge-transitive
subgraph of $K_{k_{1}l,k_{2}l}=K_{m,n}$ but not complete.
\end{proof}

\pagebreak \FRAME{dtbpFU}{6.0883in}{2.0946in}{0pt}{\Qcb{Figure 1. An example
of the construction in the theorem, with vertices drawn in the same color
being vertices that are connected to the graph in the same way.}}{}{Figure}{%
\special{language "Scientific Word";type "GRAPHIC";maintain-aspect-ratio
TRUE;display "USEDEF";valid_file "T";width 6.0883in;height 2.0946in;depth
0pt;original-width 6.0277in;original-height 2.0557in;cropleft "0";croptop
"1";cropright "1";cropbottom "0";tempfilename
'OW9TAU00.wmf';tempfile-properties "XPR";}}

\begin{remark}
\noindent Theorem \ref{21} gives rise to the following
observations/questions:

\begin{itemize}
\item When $gcd(m,n)=1$, the only possible (connected) biregular subgraph is
the complete graph $K_{m,n}$.

\item When $gcd(m,n)=2$, the method fails because the only connected,
biregular subgraph of $K_{2,2}$ is $K_{2,2}$, and we seek a non-complete
bipartite graph.

\item When $gcd(m,n)=2$, under what additional conditions does the theorem
still hold?
\end{itemize}
\end{remark}

\subsection{Edge-transitive (connected) $(r,2)$ biregular subgraphs of $%
K_{m,n}$}

Before we address the case of $gcd(m,n)=2$, we introduce some new notation,
definitions, and constructions.

Let $G$ be a (connected) $(r,2)$ biregular subgraph of $K_{m,n}$. Then the
line graph $L(G)$ of $G$ consists of $m$ sets of $r$ vertices $%
V_{1},V_{2},\dots ,V_{m}$, with each of the sets $V_{i}$ of $r$ vertices in $%
L(G)$ corresponding to the $r$ edges incident to one of the $m$ vertices in
the left partite set. All of the vertices in a given set $V_{i}$ are
adjacent to each other. Additionally, each vertex in a given set $V_{i}$ is
adjacent to \textit{exactly one} vertex in another set $V_{j}$, $i\neq j$,
since $G$ is $(r,s)$ biregular with $s=2$.

Consider the vertex set $V_{i}$. We use the notation $P(V_{i})=[a_{2},a_{3},%
\dots ,a_{m}]$ to describe the partitioning of the $r$ edges that have
exactly one endpoint in $V_{i}$ among the $m-1$ remaining sets $V_{j}$. For
example, if $m=4$ and $r=4$, the notation $P(V_{1})=[2,2,0]$ means that,
without loss of generality, of the four edges with exactly one endpoint in $%
V_{1}$, there are $a_{2}=2$ edges that are incident with vertices in $V_{2}$%
, $a_{3}=2$ edges that are incident with vertices in $V_{3}$, and $a_{4}=0$
edges that are incident with a vertex in $V_{4}$.

The order of the coordinates does not matter. That is, $[3,1,0]$ is the same
as $[1,0,3]$.

\begin{definition}
Let $A_{i}\subset V_{i}$ be a set of vertices corresponding to a part of $%
P(V_{i})$ and $A_{j}\subset V_{j}$ be a set of vertices corresponding to a
part of $P(V_{j})$, $i\neq j$, and suppose $|A_{i}|=|A_{j}|$. We say that $%
A_{i}$ is \textbf{one-to-one connected} to $A_{j}$ if each vertex in $A_{i}$
is adjacent to exactly one vertex in $A_{j}$ (and vice versa).
\end{definition}

\begin{definition}
We say the set $V_{i}$ is \textbf{uniformly partitioned} if the \textit{%
nonzero} coordinates of $P(V_{i})$ take on exactly one value, and we call $%
P(V_{i})$ a uniform partition.
\end{definition}

\noindent For instance, $[3,1,0]$ is not a uniform partition for $r=4$ among 
$m-1=3$ coordinates because there are two distinct values of the nonzero
coordinates, $3$ and $1$, but $[2,2,0]$ is a uniform partition because $2$
is the only nonzero coordinate.

\FRAME{dtbpFU}{5.578in}{2.4716in}{0pt}{\Qcb{Figure 2. $L(G)$ and $R(L(G))$
for a $(6,2)$ biregular subgraph of $K_{4,12}$. $%
P(V_{1})=P(V_{2})=P(V_{3})=P(V_{4})=[3,3,0]$, which is a uniform partition.}%
}{}{Figure}{\special{language "Scientific Word";type
"GRAPHIC";maintain-aspect-ratio TRUE;display "USEDEF";valid_file "T";width
5.578in;height 2.4716in;depth 0pt;original-width 5.521in;original-height
2.431in;cropleft "0";croptop "1";cropright "1";cropbottom "0";tempfilename
'OW9TAU01.wmf';tempfile-properties "XPR";}}

\begin{definition}
Suppose all the $V_{i}$ are uniformly partitioned in the same way, that is,
each $P(V_{i})$ is a uniform partition and $%
P(V_{i})=P(V_{j})=[d,d,...,d,0,0] $ for all $i,j$ and some positive integer $%
d$. (Note that, in the context of the dimensions $r$ and $m$, $d$ divides $r$
and $\frac{r}{d}<m$.) We say a graph $R(L(G))$ is a \textbf{reduction graph}
of the line graph $L(G)$ if it is constructed from $L(G)$ in the following
way:

\begin{itemize}
\item $R(L(G))$ consists of $m$ sets of $\frac{r}{d}$ vertices $V_{1}
^{\prime}, V_{2}^{\prime}, \dots, V_{m}^{\prime}$, and for every $i$, all of
the vertices in $V_{i}^{\prime}$ are adjacent to each other.

\item For $i \neq j$, let $A_{i} \subset V_{i}$ be a set of vertices
corresponding to a part of $P(V_{i})$, and let $A_{j} \subset V_{j}$ be a
set of vertices corresponding to a part of $P(V_{j})$. Then a one-to-one
connection (defined earlier) between $A_{i}$ and $A_{j}$ in $L(G)$
corresponds to a vertex in $V_{i}^{\prime}$ being adjacent to exactly one
vertex in $V_{j}^{\prime}$.

\item If $A_{i}$ and $B_{i}$ are vertex sets corresponding to two distinct
parts of $P(V_{i})$, then $A_{i}$ is one-to-one connected to a subset of
vertices in $V_{j}$ and $B_{i}$ is one-to-one connected to a subset of
vertices in $V_{k}$ where \textbf{$j \neq k$}. Therefore, in $R(L(G))$, two
vertices in a given set $V_{i}^{\prime}$ must not be adjacent to vertices in
the same set (other than within $V_{i}$ itself).
\end{itemize}
\end{definition}

\begin{theorem}
\label{r1}Let $mr=2n$. If $r$ is even, then there exists an edge-transitive
(connected) $(r,2)$ biregular subgraph of $K_{m,n}$.
\end{theorem}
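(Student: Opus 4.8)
The plan is to realize the desired graph as a subdivision of a cyclic multigraph, which makes both biregularity and a large automorphism group transparent. Write $r = 2t$ (possible since $r$ is even) and set $n = mr/2 = mt$. I would construct $G$ as follows: let the left part be $A = \{a_0, a_1, \dots, a_{m-1}\}$ and partition the right part into $m$ \emph{bundles} $B_0, B_1, \dots, B_{m-1}$, each of size $t$, where every vertex of $B_i$ is joined to exactly the two left vertices $a_i$ and $a_{i+1}$ (subscripts read modulo $m$). Equivalently, $G$ is the graph obtained from the cycle $C_m$ by replacing each edge with $t$ parallel edges and then subdividing every resulting edge once.

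First I would check the numerology and connectivity. Each right vertex lies in a unique bundle and hence has degree exactly $2$, while each left vertex $a_i$ is incident precisely to the bundles $B_{i-1}$ and $B_i$, giving it degree $2t = r$; since $|A| = m$ and $|B_0| + \dots + |B_{m-1}| = mt = n$, the graph $G$ is an $(r,2)$ biregular subgraph of $K_{m,n}$. Connectivity is immediate: for each $i$ any vertex $w$ of $B_i$ provides a path $a_i - w - a_{i+1}$, so the $a_i$ are joined in a cycle and every right vertex is adjacent to some $a_i$. In the degenerate case $m = 2$ the two bundles share the same pair of endpoints and $G = K_{2,r}$, which is already known to be edge-transitive, so one may assume $m \geq 3$.

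The heart of the argument is to produce enough automorphisms to act transitively on $E(G)$. I would exhibit three families: the rotation $\rho : a_i \mapsto a_{i+1}$ carrying $B_i$ onto $B_{i+1}$; the reflections generating, together with $\rho$, a dihedral action $D_m$ on the cyclic arrangement; and, for each bundle independently, every permutation of its $t$ vertices (these are automorphisms because the vertices of a bundle have identical neighborhoods $\{a_i, a_{i+1}\}$). To see edge-transitivity, take an arbitrary edge $\{a_i, w\}$ with $w$ in a bundle incident to $a_i$. Applying a suitable power of $\rho$ moves $a_i$ to $a_0$; the reflection $a_j \mapsto a_{-j}$ fixes $a_0$ and interchanges the two bundles $B_{m-1}$ and $B_0$ incident to $a_0$, so $w$ can be sent into whichever of these two bundles we like; and a within-bundle permutation then sends it to any prescribed vertex of that bundle. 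Hence every edge is carried to a fixed reference edge at $a_0$, proving transitivity.

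The routine parts are the degree count and the connectivity check; the step requiring the most care is verifying that the maps just listed really are automorphisms and that composing them achieves transitivity on \emph{all} edges rather than only those incident to a common vertex. The two observations that make this work are that the vertices within a bundle are mutually interchangeable and that the reflection fixing $a_0$ swaps its two incident bundles, which together reduce the orbit question to a single edge. Finally, I would note that this settles existence for every even $r$, with $G$ non-complete precisely when $m \geq 3$, and that the parity obstruction appearing for odd $r$ must be handled by a separate analysis.
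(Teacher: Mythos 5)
Your construction is correct, and the graph you build is in fact isomorphic to the one the paper constructs: the paper works inside the line graph $L(G)$, partitioning each clique $V_{i}$ as $P(V_{i})=\left[ \frac{r}{2},\frac{r}{2},0,\dots ,0\right]$ and one-to-one connecting the two halves of consecutive $V_{i}$'s around a cycle, and that line graph is precisely the line graph of your ``bundled, subdivided cycle'' (your bundle $B_{i}$ of $t=r/2$ degree-two vertices shared by $a_{i}$ and $a_{i+1}$ corresponds exactly to a one-to-one connection between half of $V_{i}$ and half of $V_{i+1}$). Where you genuinely diverge is in the verification of edge-transitivity. The paper deduces it from vertex-transitivity of $L(G)$, invoking the Whitney--Sabidussi equivalence stated in its introduction, and it merely asserts that the constructed $L(G)$ is vertex-transitive without exhibiting any automorphism. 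You instead work on $G$ itself and write down an explicit group of automorphisms --- the dihedral action on the cycle of bundles together with arbitrary permutations within each bundle, legitimate because bundle vertices have identical neighborhoods --- and you check that composing a rotation, possibly the reflection fixing $a_{0}$, and a within-bundle permutation carries an arbitrary edge to a fixed reference edge. Your route is more elementary and more complete: it avoids the line-graph detour, supplies the transitivity argument the paper leaves implicit, and records connectivity and the degenerate case $m=2$ (where $G=K_{2,r}$), which the paper does not discuss. What the paper's formulation buys in exchange is uniformity with its surrounding machinery: the partition notation $P(V_{i})$ and the line-graph viewpoint are reused in Theorem \ref{r2}, its corollaries, and the reduction-graph arguments, so phrasing the existence construction in that language lets the necessity and uniqueness results that follow read in the same framework.
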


\begin{proof}
Since $r$ is even, partition the $r$ vertices in each $V_{i}$ by $P(V_{i})=[%
\frac{r}{2},\frac{r}{2},0,\dots ,0]$ (where of course $P(V_{i})$ has $m-1$
coordinates), and denote the two parts of $\frac{r}{2}$ vertices
corresponding to this partition $A_{i}$ and $B_{i}$. Let $A_{i}$ be
one-to-one connected to $B_{i-1}$ when $i>1$ and $B_{m}$ when $i=1$. Let $%
B_{i}$ be one-to-one connected to $A_{i+1}$ when $i<m$ and $B_{1}$ when $i=m$%
. The graph $L(G)$ is therefore vertex-transitive, meaning that the
corresponding graph $G$ is edge-transitive.
\end{proof}

\begin{remark}
The graph proposed in the theorem is not necessarily unique. For example,
there exist two edge-transitive and connected $(6,2)$ biregular subgraphs of 
$K_{4,12}$, one with a line graph corresponding to the partition $[3,3,0]$,
which is the construction used in the proof of the theorem, and one with a
line graph corresponding to the partition $[2,2,2]$.
\end{remark}

\FRAME{dtbpFU}{4.8456in}{2.0807in}{0pt}{\Qcb{Figure 3. Line graph
representations of $(6,2)$ biregular subgraphs of $K_{4,12}$. The left one
is the construction used.}}{}{Figure}{\special{language "Scientific
Word";type "GRAPHIC";maintain-aspect-ratio TRUE;display "USEDEF";valid_file
"T";width 4.8456in;height 2.0807in;depth 0pt;original-width
4.7919in;original-height 2.0418in;cropleft "0";croptop "1";cropright
"1";cropbottom "0";tempfilename 'OW9TAU02.wmf';tempfile-properties "XPR";}}

\begin{theorem}
\label{r2}Let $G$ be an $(r,2)$ biregular subgraph of $K_{m,n}$. If $G$ is
edge-transitive and connected, then, using the notation above, every $V_{i}$
must be uniformly partitioned, that is the \textit{nonzero} coordinates $%
a_{i}$ in $P(V_{i})=[a_{2},a_{3},\dots ,a_{m}]$ take on exactly one value.
\end{theorem}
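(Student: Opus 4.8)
The plan is to translate the statement into a condition on the line graph and then exhibit a numerical invariant of vertices that detects the part sizes. By the Proposition relating edge-transitivity and line graphs (following Whitney and Sabidussi), since $G$ is connected and edge-transitive, $L(G)$ is vertex-transitive. First I would record the precise structure of $L(G)$: because $G$ is bipartite with right-degree $s=2$, every vertex of $L(G)$ (an edge $a_i b$ of $G$) has exactly one neighbor outside its own set $V_i$, namely the edge sharing the degree-two right endpoint $b$. Thus $L(G)$ is the disjoint union of the $m$ cliques $V_1,\dots,V_m$ (each a $K_r$) together with a perfect matching $M$ whose edges join vertices in distinct cliques, and $P(V_i)=[a_2,\dots,a_m]$ simply records, for each $j\neq i$, the number $a_j$ of matching edges running between $V_i$ and $V_j$.

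The key step is to count, for a fixed vertex $v\in V_i$, the number of induced $4$-cycles of $L(G)$ passing through $v$; this quantity is preserved by every automorphism and hence, by vertex-transitivity, is the same for all vertices of $L(G)$. I would show this count equals $a_j-1$, where $j$ is the clique containing the matching-partner $w=v^{*}$ of $v$ and $a_j$ is the size of the part of $P(V_i)$ containing $v$. The mechanism is that in the cliques-plus-matching structure each vertex of an induced $4$-cycle must use exactly one clique edge and one matching edge (two clique edges at a vertex would force a chord, and a vertex lies on at most one matching edge), so every induced $4$-cycle alternates clique edge, matching edge, clique edge, matching edge and therefore lives between exactly two cliques, using two of the matching edges between them. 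For $v$ this forces the cycle to use the matching edge $vw$ and hence to lie between $V_i$ and $V_j$; the second matching edge can be any of the other $a_j-1$ matching edges between $V_i$ and $V_j$, and each choice yields exactly one induced $4$-cycle through $v$.

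Granting the count, the conclusion is immediate: within a single set $V_i$, a vertex lying in a part of size $a_j$ is on exactly $a_j-1$ induced $4$-cycles, so if two nonzero coordinates of $P(V_i)$ differed, two vertices of $V_i$ would carry different invariants, contradicting vertex-transitivity of $L(G)$. Hence all nonzero coordinates of $P(V_i)$ coincide and $V_i$ is uniformly partitioned. (The same argument in fact forces equality of the nonzero parts across all the $V_i$ simultaneously.) The main obstacle I anticipate is the careful enumeration of induced $4$-cycles: one must verify both that no induced $4$-cycle can use two clique edges at any vertex and that the alternating cycles between a fixed pair of cliques are counted without duplication, so that the invariant equals exactly $a_j-1$ rather than some other function of the part size. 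Establishing the clean description of $L(G)$ as disjoint $K_r$'s plus a perfect matching, which rests on $s=2$ and the absence of triangles in the bipartite $G$, is the enabling preliminary.
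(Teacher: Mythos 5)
Your proposal is correct, and its overall skeleton matches the paper's: pass to the line graph, invoke vertex-transitivity of $L(G)$, identify $L(G)$ as disjoint copies of $K_r$ joined by a perfect matching, and exhibit a vertex invariant equal to (part size) $-\,1$ so that two parts of different sizes yield vertices no automorphism can exchange. The difference lies in the invariant. The paper compares $v_a$ and $v_b$ directly: $v_a$ has $a-1$ neighbors inside $V_i$ whose matching partners land in the same outside clique as $v_a$'s own partner, while $v_b$ has $b-1$ such neighbors, and the paper declares the two vertices ``distinguishable.'' As written, that count is defined relative to the decomposition of $L(G)$ into the cliques $V_i$ and the matching, so to be fully rigorous one must still argue that every automorphism of $L(G)$ respects this decomposition (true, e.g., because for $r\geq 3$ the sets $V_i$ are exactly the maximal cliques of size at least $3$, but the paper does not address it). Your induced-$4$-cycle count is numerically the same quantity---the induced $4$-cycles through $v$ are in bijection with the neighbors of $v$ in $V_i$ matched into the same clique as $v$'s partner, via the alternation argument you give---but it is manifestly preserved by automorphisms, so your version supplies exactly the justification the paper's word ``distinguishable'' elides. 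Your enumeration is sound: two clique edges at a vertex of a $4$-cycle would create a chord, each vertex lies on a unique matching edge, and the potential chords of an alternating cycle between two cliques are non-edges precisely because matching partners are unique. A side benefit you correctly note is that your argument proves the nonzero coordinate is the same across \emph{all} the $V_i$ simultaneously, slightly more than the per-$V_i$ statement of the theorem, and this uniformity across cliques is what the paper's subsequent reduction-graph construction actually assumes.
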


\begin{proof}
We proceed by contradiction, that is, we assume that $G$ is edge-transitive
and that there exists a partition $P(V_{i})$ that contains two distinct
nonzero coordinates $a$ and $b$. Since $G$ is edge-transitive, $L(G)$ must
be vertex-transitive. Let $A \subset V_{i}$ be a vertex set corresponding to
a part of $P(V_{i})$ of size $a$ and $B \subset V_{i}$ be a vertex set
corresponding to a part of $P(V_{i})$ of size $b$. Moreover, choose any
vertex $v_{a} \in A$ and $v_{b} \in B$. Then $v_{a}$ has $a-1$ neighbors in $%
V_{i}$ which are adjacent to vertices in some $V_{j}$ $(j\neq i)$ and $b$
neighbors in $V_{i}$ which are adjacent to vertices in some $V_{k}$ $(k \neq
i)$, such that $V_{j} \neq V_{k}$. Similarly, $v_{b}$ has $b-1$ neighbors in 
$V_{i}$ which are adjacent to vertices in $V_{k}$ and $a$ neighbors in $%
V_{i} $ which are adjacent to vertices in $V_{j}$. Since $a-1\neq b-1$, $%
v_{a}$ and $v_{b}$ are distinguishable from each other, meaning that $L(G)$
is not vertex-transitive, and so $G$ is not edge-transitive, which is a
contradiction (Figure 4).
\end{proof}

\begin{remark}
The condition in the theorem is necessary but not sufficient. That is not
every uniform partition has a realization in the line graph.
\end{remark}

\FRAME{dtbpFU}{2.7233in}{2.3592in}{0pt}{\Qcb{Figure 4. A vertex in set $A$
can be distinguished from a vertex in set $B$, so $L(G)$ is not
vertex-transitive, implying that $G$ is not edge-transitive.}}{}{Figure}{%
\special{language "Scientific Word";type "GRAPHIC";maintain-aspect-ratio
TRUE;display "USEDEF";valid_file "T";width 2.7233in;height 2.3592in;depth
0pt;original-width 2.6809in;original-height 2.3194in;cropleft "0";croptop
"1";cropright "1";cropbottom "0";tempfilename
'OW9TAU03.wmf';tempfile-properties "XPR";}}

\begin{example}
As a counterexample, we consider that a $[2,2,2,0]$ partitioning for a line
graph $L(G)$ of a $(6,2)$ biregular subgraph $G$ of $K_{5,15}$ has no
realization. For, assuming by contradiction that such a realization $L(G)$
exists, $R(L(G))$ has $m=5$ sets $V_{1}^{\prime },V_{2}^{\prime },\dots
,V_{5}^{\prime }$ of $r=3$ vertices. But a construction $R(L(G))$ satisfying
the conditions of reduction graphs (mentioned earlier) is impossible, since $%
R(L(G))$ would be the line graph of a $(3,2)$ biregular subgraph of $K_{5,x}$
for some $x$. But $5\cdot 3=2x$ does not have an integer solution $x$, which
is a contradiction.
\end{example}

\begin{corollary}
Let $\gcd (4,n)=2$ and $6\nmid n$. Then there does not exist an
edge-transitive (connected) $\left( \frac{n}{2},2\right) $ biregular
subgraph of $K_{4,n}$.
\end{corollary}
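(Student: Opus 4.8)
The plan is to apply the necessary condition of Theorem \ref{r2} and then eliminate all surviving uniform partitions, the crucial last one by a connectivity argument. Throughout, suppose toward a contradiction that $G$ is a connected, edge-transitive $\left(\frac{n}{2},2\right)$ biregular subgraph of $K_{4,n}$, and recall that each vertex of $V_i$ has exactly one neighbour outside $V_i$, so that $P(V_i)=[a_2,a_3,a_4]$ records how the $r$ external neighbours of the vertices in $V_i$ are distributed among the other three sets.

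First I would record the arithmetic setup. Since $mr=2n$ with $m=4$, we have $r=\frac{n}{2}$; the hypothesis $\gcd(4,n)=2$ says $n\equiv 2 \pmod 4$, so $r$ is \emph{odd}, and the hypothesis $6\nmid n$ says $3\nmid r$ (as $3\mid \frac{n}{2}$ is equivalent to $6\mid n$). Because $m=4$, every partition $P(V_i)$ has exactly three coordinates summing to $r$. By Theorem \ref{r2}, edge-transitivity forces each $V_i$ to be uniformly partitioned, so I would enumerate the uniform partitions of $r$ into three coordinates according to the number of nonzero entries: the type $[d,d,d]$ forces $3d=r$, i.e. $3\mid r$, excluded since $6\nmid n$; the type $[d,d,0]$ forces $2d=r$, impossible since $r$ is odd; hence the only survivor is the single-nonzero type $[r,0,0]$.

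The remaining step, which I expect to be the crux, is to show that $P(V_i)=[r,0,0]$ for every $i$ forces $G$ to be disconnected. The argument would run as follows: if $P(V_1)=[r,0,0]$, then all $r$ external neighbours of $V_1$ lie in a single set, say $V_2$, realising a full one-to-one connection between $V_1$ and $V_2$. But $V_2$ has only $r$ external neighbours of its own, and all of them are now consumed in matching back to $V_1$, so $P(V_2)=[r,0,0]$ pointing to $V_1$. Consequently neither $V_1$ nor $V_2$ has any external capacity left for $V_3$ or $V_4$, so $V_3$ and $V_4$ must be fully matched to each other. Thus $\{V_1,V_2\}$ and $\{V_3,V_4\}$ form two blocks of $L(G)$ with no edges between them, so $L(G)$ is disconnected, and therefore $G$ is disconnected (no right vertex is adjacent to a left vertex from each block, since such a vertex would produce an edge of $L(G)$ across the blocks). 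This contradicts connectivity and completes the proof.

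The main obstacle is making this pairing/isolation argument airtight: specifically, verifying that a full one-to-one connection $V_1\to V_2$ genuinely exhausts the external capacity of $V_2$ (so that $P(V_2)$ cannot also touch $V_3$ or $V_4$), and hence that the four sets split into two mutually saturated pairs. Once that capacity bookkeeping is established, disconnection — and thus the corollary — follows immediately. As a sanity check, the case $n=10$ recovers the motivating example from the introduction, namely the nonexistence of an edge-transitive connected $(5,2)$ biregular subgraph of $K_{4,10}$.
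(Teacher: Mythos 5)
Your proposal is correct and follows the same core strategy as the paper's proof: invoke Theorem \ref{r2} to force uniform partitions, then kill $[d,d,d]$ (needs $3\mid r$, contradicting $6\nmid n$) and $[d,d,0]$ (needs $2\mid r$, contradicting $\gcd(4,n)=2$) by arithmetic. In fact your write-up is more complete than the paper's: the paper simply declares that ``the only eligible partitions are $\left[\frac{r}{2},\frac{r}{2},0\right]$ and $\left[\frac{r}{3},\frac{r}{3},\frac{r}{3}\right]$,'' silently discarding the third uniform partition $[r,0,0]$ with no justification, whereas you supply exactly the missing argument: a full one-to-one connection $V_1\to V_2$ exhausts the $r$ external edges of $V_2$, forcing $V_3$ and $V_4$ to be saturated by each other, so $L(G)$ splits into the two blocks $V_1\cup V_2$ and $V_3\cup V_4$ and $G$ cannot be connected. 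Your capacity bookkeeping is sound (each $V_i$ has exactly $r$ external edges because $s=2$, and each external edge of $V_1$ is also an external edge of its other endpoint's set), so the step you flagged as the main obstacle does go through, and the corollary follows.
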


\begin{proof}
Since $\gcd (4,n)=2$, $r=\frac{n}{2}$ is not even. Furthermore, since $6$
does not divide $n$, $3$ does not divide $r=\frac{n}{2}$. By the theorem,
the only eligible partitions are $\left[ \frac{r}{2},\frac{r}{2},0\right] $
and $\left[ \frac{r}{3},\frac{r}{3},\frac{r}{3}\right] $. But since $r$ is
not even, the first partition cannot occur, and since $r$ is not divisible
by $3$, the second partition cannot occur.
\end{proof}

\begin{corollary}
\label{c23} Let $\gcd (4,n)=2$ and $6|n$. Then there exists a \textbf{unique}
edge-transitive (connected) $(\frac{n}{2},2)$ biregular subgraph of $K_{4,n}$%
.
\end{corollary}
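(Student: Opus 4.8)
The plan is to combine the necessary condition from Theorem \ref{r2} with a direct construction. Since $\gcd(4,n)=2$ we have $n \equiv 2 \pmod 4$, so $r = \tfrac n2$ is odd; and since $6 \mid n$ we have $3 \mid r$. Writing $d = \tfrac r3 = \tfrac n6$, my first step is to determine which partitions $P(V_i)$ are admissible. Because $m = 4$, each $P(V_i)$ has exactly $m-1 = 3$ coordinates, so the only uniform partitions of $r$ are $[r,0,0]$, $[\tfrac r2,\tfrac r2,0]$, and $[\tfrac r3,\tfrac r3,\tfrac r3]$. The middle one is impossible because $r$ is odd, and I would rule out $[r,0,0]$ on connectivity grounds: if $P(V_1)=[r,0,0]$ then $a_1$ shares all $r$ of its neighbours with a single other left-vertex $a_2$, forcing $\{a_1,a_2\}$ together with their common neighbours to be a connected component disjoint from $a_3,a_4$, contradicting connectedness. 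By Theorem \ref{r2} every $V_i$ is uniformly partitioned, so the only surviving possibility is $P(V_i) = [d,d,d]$ for all $i$.

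Translating this back to $G$, let $x_{ij}$ denote the number of right-vertices adjacent to both $a_i$ and $a_j$. The partition $[d,d,d]$ says every $x_{ij} = d$; one checks this is consistent, since $\sum_{j \ne i} x_{ij} = 3d = r$ and $\sum_{i<j} x_{ij} = 6d = n$. This already yields uniqueness: the intersection data $x_{ij} = d$ for all $1 \le i < j \le 4$ determines $G$ up to isomorphism, because every right-vertex has degree $2$ and hence a neighbourhood equal to one of the six $2$-subsets of $\{a_1,\dots,a_4\}$, each such $2$-subset occurring exactly $d$ times; any two graphs with this neighbourhood multiset are isomorphic via a bijection matching right-vertices with equal neighbourhoods.

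For existence I would exhibit the graph $G$ directly: take $A = \{a_1,\dots,a_4\}$ and, for each $2$-subset $\{i,j\}$, take $d$ right-vertices each adjacent exactly to $a_i$ and $a_j$. (For $d=1$ this is the subdivision of $K_4$; in general it is the graph obtained by replacing each edge of $K_4$ with $d$ internally disjoint paths of length two.) This $G$ is a connected $(r,2)$ biregular subgraph of $K_{4,n}$, and I would prove edge-transitivity by producing automorphisms: the symmetric group $S_4$ acts by permuting $A$ and carrying the right-vertex of type $(\{i,j\},c)$ to $(\{\sigma i, \sigma j\},c)$, and for each pair $\{i,j\}$ the $d$ copies may be permuted freely. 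Given edges $(a_i,w)$ and $(a_{i'},w')$ with $w$ of type $\{i,j\}$ and $w'$ of type $\{i',j'\}$, first choose $\sigma \in S_4$ with $\sigma(i)=i'$ and $\sigma(j)=j'$, then permute copies within the block of $\{i',j'\}$; the composite automorphism sends the first edge to the second, establishing edge-transitivity.

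The main obstacle I anticipate is not the construction but the bookkeeping in the necessity argument: carefully arguing that $[r,0,0]$ violates connectedness, and confirming that the conclusion of Theorem \ref{r2} together with the symmetry $x_{ij}=x_{ji}$ leaves $[d,d,d]$ as the unique admissible pattern, which is what collapses the family to a single isomorphism type. Note that Theorem \ref{r1} does not apply here since $r$ is odd, which is precisely why the explicit $[d,d,d]$ construction is needed; and the earlier remark that not every uniform partition is realizable is sidestepped because I exhibit a realization by hand rather than inferring one abstractly.
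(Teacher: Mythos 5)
Your proposal is correct, and it reaches the conclusion by a partly different route than the paper. The narrowing step is the same: both you and the paper invoke Theorem \ref{r2} to force uniform partitions and use the parity of $r=\tfrac n2$ to kill $\left[\tfrac r2,\tfrac r2,0\right]$, leaving $\left[\tfrac r3,\tfrac r3,\tfrac r3\right]$. (You are actually more careful here: the paper never addresses $[r,0,0]$, whereas you rule it out explicitly by the connectivity argument that $N(a_1)=N(a_2)$ would split off a component.) After that the treatments diverge. For uniqueness, the paper simply asserts that a partition with no zero coordinates admits at most one realizing line graph; you instead translate the partition data into the intersection numbers $x_{ij}=d$ and observe that a $(r,2)$ graph is determined up to isomorphism by the multiset of right-vertex neighbourhoods, which is a cleaner and fully justified version of the same claim. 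For existence, the paper exhibits the $n=6$ line graph (essentially by figure) and then generalizes to $K_{4,6k}$ through its reduction-graph machinery, arguing that any $R(L(G(k)))$ must be isomorphic to the base case; you bypass line graphs and reduction graphs entirely, constructing $G$ directly as $K_4$ with each edge replaced by $d$ internally disjoint paths of length two and verifying edge-transitivity with the explicit $S_4$-plus-block-permutation automorphisms. Your route buys self-containedness and an identification of the unique graph as a concrete, named object with a visible automorphism group; the paper's route buys integration with its reduction-graph framework, which it reuses elsewhere (e.g., to show the $[2,2,2,0]$ partition for $K_{5,15}$ has no realization), at the cost of leaving the base-case verification and the uniqueness assertion more implicit.
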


\begin{proof}
Since $r=\frac{n}{2}$ is divisible by $3$, of the two eligible partitions $%
\left[ \frac{r}{2},\frac{r}{2},0\right] $ and $\left[ \frac{r}{3},\frac{r}{3}%
,\frac{r}{3}\right] $ discussed in the previous proof, the partition $\left[ 
\frac{r}{3},\frac{r}{3},\frac{r}{3}\right] $ can occur (but the partition $%
\left[ \frac{r}{2},\frac{r}{2},0\right] $ cannot, since again $n$ is not
divisible by $4$). Note that, since there are no $0$ values in the
partition, any line graph that realizes this partition must be unique,
making the original graph unique as well. There exists a vertex-transitive
line graph that realizes this partition when $n=6$, meaning there exists a $%
(3,2)$ biregular subgraph $G$ of $K_{4,6}$ which is edge-transitive. Indeed,
the construction of $L(G)$ generalizes to a $(3k,2)$ biregular subgraph $%
G(k) $ of $K_{4,6k}$. This is because, if a reduction graph $R(L(G(k)))$
exists, it must be isomorphic to $L(G)$, so the existence of $L(G)$ implies
the existence of $R(L(G(k)))$ (Figure 5).
\end{proof}

\FRAME{dtbpFU}{4.6985in}{1.7876in}{0pt}{\Qcb{Figure 5. The line graph for
the unique edge-transitive (connected) $(6,2)$ subgraph of $K_{4,12}$
(left). All line graphs of an edge-transitive and connected $(3k,2)$
subgraph $G(k)$ of $K_{4,6k}$ reduce to the $k=1$ case (right).}}{}{Figure}{%
\special{language "Scientific Word";type "GRAPHIC";maintain-aspect-ratio
TRUE;display "USEDEF";valid_file "T";width 4.6985in;height 1.7876in;depth
0pt;original-width 4.6458in;original-height 1.7504in;cropleft "0";croptop
"1";cropright "1";cropbottom "0";tempfilename
'OW9TAU04.wmf';tempfile-properties "XPR";}}

\section{Edge-Transitive Graphs Among Vertex-Transitive Graphs}

Recall that some previously known infinite families of vertex-transitive
graphs are Wreath graphs and Kneser graphs. We identify an additional
infinite family of edge-transitive graphs that are vertex-transitive, stated
in terms of Cartesian products.

\begin{theorem}
\label{31} The graph $\overline{K_{m}\times K_{n}}$ is edge-transitive.
\end{theorem}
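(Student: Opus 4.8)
The plan is to work directly with the adjacency structure of the complement rather than through any alternative characterization. First I would record that in $\overline{K_{m}\times K_{n}}$ two distinct vertices $(u_{i},v_{j})$ and $(u_{k},v_{l})$ are adjacent precisely when they are \emph{non}-adjacent in $K_{m}\times K_{n}$. A short case check against the definition of the Cartesian product shows that non-adjacency in $K_{m}\times K_{n}$ (for distinct vertices) holds if and only if $i\neq k$ \emph{and} $j\neq l$. In other words, $\overline{K_{m}\times K_{n}}$ is the graph on the $m\times n$ array of vertices in which two cells are joined exactly when they differ in both coordinates.

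Next I would exhibit a large group of automorphisms. For any $\sigma$ in the symmetric group $S_{m}$ and any $\tau$ in $S_{n}$, the map $(u_{i},v_{j})\mapsto (u_{\sigma(i)},v_{\tau(j)})$ is a bijection of the vertex set, and since $i\neq k$ is equivalent to $\sigma(i)\neq \sigma(k)$ and $j\neq l$ is equivalent to $\tau(j)\neq \tau(l)$, it preserves the adjacency relation described above. Hence $S_{m}\times S_{n}$ embeds in the automorphism group of $\overline{K_{m}\times K_{n}}$, acting by independently permuting the row indices and the column indices.

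To prove edge-transitivity I would then carry every edge to a fixed canonical edge. Assuming $m,n\geq 2$ (otherwise the graph has no edges and the statement is vacuous), fix the edge $\{(u_{1},v_{1}),(u_{2},v_{2})\}$, which is present since the two endpoints differ in both coordinates. Given an arbitrary edge $\{(u_{i},v_{j}),(u_{k},v_{l})\}$, edge-membership forces $i\neq k$ and $j\neq l$, so I may choose $\sigma\in S_{m}$ with $\sigma(i)=1,\ \sigma(k)=2$ and $\tau\in S_{n}$ with $\tau(j)=1,\ \tau(l)=2$, extending each arbitrarily to a permutation. The resulting automorphism sends the given edge to the canonical edge, so all edges lie in a single orbit and the graph is edge-transitive.

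The argument is essentially a bookkeeping of the complement's adjacency, so the only real care needed is the opening case analysis that converts the Cartesian-product-and-complement definition into the clean condition ``$i\neq k$ and $j\neq l$,'' together with disposing of the degenerate cases $m=1$ or $n=1$. Once that adjacency description is in hand, the abundance of permutation automorphisms in $S_{m}\times S_{n}$ makes the edge-orbit argument immediate, and I do not expect any genuine obstacle beyond this translation.
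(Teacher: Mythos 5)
Your proof is correct, and it follows a route that differs from the paper's in two respects worth noting. The paper does not compute the complement of the Cartesian product directly; instead it invokes the identification $\overline{K_{m}\times K_{n}}=\overline{L(K_{m,n})}$ and reconstructs the adjacency structure from the line graph of $K_{m,n}$: the vertices $v_{i,k}$ correspond to edges $a_{i}b_{k}$ of $K_{m,n}$, and after complementation one finds that $v_{i,k}$ is adjacent to $v_{j,l}$ exactly when $j\neq i$ and $l\neq k$ --- the same ``differ in both coordinates'' condition you derive by a case check on the product definition. Where the two arguments genuinely part ways is the final step: the paper concludes edge-transitivity informally, asserting that every vertex in a partite set is indistinguishable from every other and that the partite sets are indistinguishable from one another, whereas you exhibit the explicit automorphism group $S_{m}\times S_{n}$ acting by $(u_{i},v_{j})\mapsto(u_{\sigma(i)},v_{\tau(j)})$ and carry an arbitrary edge to the canonical edge $\{(u_{1},v_{1}),(u_{2},v_{2})\}$ by choosing $\sigma,\tau$ on the four relevant indices. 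Your version is the more rigorous and self-contained one: it never needs the line-graph identification (which the paper cites rather than proves), and it replaces the indistinguishability claim with concrete automorphisms, also disposing of the degenerate cases $m=1$ or $n=1$ which the paper leaves implicit. What the paper's route buys in exchange is context --- the identification with the complement of the rook's graph $L(K_{m,n})$ situates the family among known line-graph constructions used elsewhere in the paper.
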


\begin{proof}
The graph $\overline{K_{m}\times K_{n}}$ is precisely the graph $\overline{%
L(K_{m,n})}$, that is, the complement of the line graph of $K_{m,n}$ \cite%
{wolfram}. First, we observe the structure of $L(K_{m,n})$. Let the partite
sets of $K_{m,n}$ be $A=\{a_{1},a_{2},\dots ,a_{m}\}$ and $%
B=\{b_{1},b_{2},\dots ,b_{n}\}$. The graph $L(K_{m,n})$ consists of $m$ sets
of $n$ vertices, which we denote $V_{1},V_{2},\dots ,V_{m}$. The $n$
vertices in each $V_{i}$ correspond to the edges incident to $a_{i}$ in the
graph of $K_{m,n}$. Specifically, $V_{i}=\{v_{i,1},v_{i,2},\dots ,v_{i,n}\}$
where $v_{i,k}$ corresponds to the edge $a_{i}b_{k}$ in the graph $K_{m,n}$.
By construction, all of the vertices in a given set $V_{i}$ are adjacent to
each other, since these vertices correspond to all edges incident to $a_{i}$
in $K_{m,n}$. Additionally, each $v_{i,k}$ is adjacent to $v_{j,k}$ for all $%
j\neq i$, since these vertices correspond to all edges incident to $b_{k}$
in $K_{m,n}$. This completes the construction of $L(K_{m,n})$. To construct $%
\overline{L(K_{m,n})}$, we retain the vertex sets $V_{1},\dots ,V_{m}$.
However, now we have an $m$-partite graph, since none of the edges in $V_{i}$
are connected to each other in $\overline{L(K_{m,n})}$. Each $v_{i,k}$ is
connected to $v_{j,l}$ for all $j\neq i$ and all $l\neq k$. In other words,
all possible edges of the $m$-partite graph exist except for edges of the
form $v_{i,k}v_{j,k}$. It is clear from this description that $\overline{%
L(K_{m,n})}$ is edge-transitive. This follows from the fact every vertex in
a given partite set is indistinguishable from every other vertex in that
set, and the fact that each partite set is indistinguishable from every
other partite set. Since $\overline{K_{m}\times K_{n}}=\overline{L(K_{m,n})}$%
, the result is proven (Figure 6).
\end{proof}

\FRAME{dtbpFU}{5.201in}{1.9899in}{0pt}{\Qcb{Figure 6. An example of the
construction in the proof of the theorem for $m=4,n=3$.}}{}{Figure}{\special%
{language "Scientific Word";type "GRAPHIC";maintain-aspect-ratio
TRUE;display "USEDEF";valid_file "T";width 5.201in;height 1.9899in;depth
0pt;original-width 5.1456in;original-height 1.951in;cropleft "0";croptop
"1";cropright "1";cropbottom "0";tempfilename
'OW9TAU05.wmf';tempfile-properties "XPR";}}

\subsection{Edge Transitivity of Circulants}

We begin with a specialized class of circulants, known as powers of cycles.

\begin{definition}
The $k$-th power of $C_{n}$ is denoted $C_{n}^{k}$ and has vertices $%
v_{1},v_{2},...,v_{n}$ and edges between $v_{i}$ and $v_{j}$ whenever $%
\left\vert j-i\right\vert \leq k$ $\func{mod}n$. \ \ \ \ \ \ \ \ \ \ \ \ \ \
\ \ \ \ \ \ \ \ \ \ \ \ \ \ \ \ \ \ \ \ \ \ \ \ \ \ \ \ \ \ \ \ \ \ \ \ \ \
\ \ \ \ \ \ \ \ \ \ \ \ \ \ \ \ \ \ \ \ \ \ \ \ \ \ \ \ \ \ \ \ \ \ \ \ \ \
\ \ \ \ \ \ \ \ \ \ \ \ \ \ \ \ \ \ \ \ \ \ \ \ \ \ \ \ \ \ \ \ \ \ \ \ \ \
\ \ \ \ \ \ \ \ \ \ \ \ \ \ \ \ \ \ \ \ \ \ \ \ \ \ \ \ \ \ \ \ \ \ \ \ \ \
\ \ \ \ \ \ \ \ \ \ \ \ \ \ \ \ \ \ \ \ \ \ \ \ \ \ \ \ \ \ \ \ \ \ \ \ \ \
\ \ \ \ \ \ \ \ \ \ \ \ \ \ \ \ \ \ \ \ \ \ \ \ \ \ \ \ \ \ \ \ \ \ \ \ \ \
\ \ \ \ \ \ \ \ \ \ \ \ \ \ \ \ \ \ \ \ \ \ \ \ \ \ \ \ \ \ \ \ \ \ \ \ \ \
\ \ \ \ \ \ \ \ \ \ \ \ \ \ \ \ \ \ \ \ \ \ \ \ \ \ \ \ \ \ \ \ \ \ \ \ \ \
\ \ \ \ \ \ \ \ \ \ \ \ \ \ \ \ \ \ \ \ \ \ \ \ \ \ \ \ \ \ \ \ \ \ \ \ \ \
\ \ \ \ \ \ \ \ \ \ \ \ \ \ \ \ \ \ \ \ \ \ \ \ \ \ \ \ \ \ \ \ \ \ \ \ \ \
\ \ \ \ \ \ \ \ \ \ \ \ \ \ \ \ \ \ \ \ \ \ \ \ \ \ \ \ \ \ \ \ \ \ \ \ \ \
\ \ \ \ \ \ \ \ \ \ \ \ \ \ \ \ \ \ \ \ \ \ \ \ \ \ \ \ \ \ \ \ \ \ \ \ \ \
\ \ \ \ \ \ \ \ \ \ \ \ \ \ \ \ \ \ \ \ \ \ \ \ \ \ \ \ \ \ \ \ \ \ \ \ \ \
\ \ \ \ \ \ \ \ \ \ \ \ \ \ \ \ \ \ \ \ \ \ \ \ \ \ \ \ \ \ \ \ \ \ \ \ \ \
\ \ \ \ \ \ \ \ \ \ \ \ \ \ \ \ \ \ \ \ \ \ \ \ \ \ \ \ \ \ \ \ \ \ \ \ \ \
\ \ \ \ \ \ \ \ \ \ \ \ \ \ \ \ \ \ \ \ \ \ \ \ \ \ \ \ \ \ \ \ \ \ \ \ \ \
\ \ \ \ \ \ \ \ \ \ \ \ \ \ \ \ \ \ \ \ \ \ \ \ \ \ \ \ \ \ \ \ \ \ \ \ \ \
\ \ \ \ \ \ \ \ \ \ \ \ \ \ \ \ \ \ \ \ \ \ \ \ \ \ \ \ \ \ \ \ \ \ \ \ \ \
\ \ \ \ \ \ \ \ \ \ \ \ \ \ \ \ \ \ \ \ \ \ \ \ \ \ \ \ \ \ \ \ \ \ \ \ \ \
\ \ \ \ \ \ \ \ \ \ \ \ \ \ \ \ \ \ \ \ \ \ \ \ \ \ \ \ \ \ \ \ \ \ \ \ \ \
\ \ \ \ \ \ \ \ \ \ \ \ \ \ \ \ \ \ \ \ \ \ \ \ \ \ \ \ \ \ \ \ \ \ \ \ \ \
\ \ \ \ \ \ \ \ \ \ \ \ \ \ \ \ \ \ \ \ \ \ \ \ \ \ \ \ \ \ \ \ \ \ \ \ \ \
\ \ \ \ \ \ \ \ \ \ \ \ \ \ \ \ \ \ \ \ \ \ \ \ \ \ \ \ \ \ \ \ \ \ \ \ \ \
\ \ \ \ \ \ \ \ \ \ \ \ \ \ \ \ \ \ \ \ \ \ \ \ \ \ \ \ \ \ \ \ \ \ \ \ \ \
\ \ \ \ \ \ \ \ \ \ \ \ \ \ \ \ \ \ \ \ \ \ \ \ \ \ \ \ \ 
\end{definition}

\begin{theorem}
The graph $C_{n}^{k}$ is edge transitive if and only if (i) $k\geq
\left\lfloor \frac{n}{2}\right\rfloor $ or (ii)\ if $n$ is even and $k=\frac{%
n}{2}-1$.
\end{theorem}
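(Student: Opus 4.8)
The plan is to treat the two directions separately, exploiting the fact that $C_n^k$ is a circulant and hence vertex-transitive; by vertex-transitivity all chords of a fixed length already lie in a single edge-orbit, so the entire question reduces to whether chords of \emph{different} lengths can be carried to one another by an automorphism.

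For sufficiency I would simply identify the graphs. In case (i) every cyclic distance is at most $k$, so $C_n^k = K_n$, which is edge-transitive. In case (ii), with $n$ even and $k = \frac{n}{2}-1$, the graph $C_n^k$ is $K_n$ with exactly the diameter chords (those of length $\frac{n}{2}$) deleted; since those chords form a perfect matching, $C_n^{n/2-1}$ is $K_n$ minus a perfect matching, which was already listed as edge-transitive.

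For necessity I would use an invariant preserved by every automorphism: the number of triangles through an edge, equivalently the number of common neighbors of its endpoints. Fix $k \ge 2$ and assume $2k < n$ (which holds whenever case (i) fails). For a chord of length $\ell$, $1 \le \ell \le k$, joining $v_0$ and $v_\ell$, I would compute $|N(v_0) \cap N(v_\ell)|$ by writing $N(v_0) = \{\pm 1, \ldots, \pm k\}$ and $N(v_\ell) = \ell + \{\pm 1, \ldots, \pm k\}$ and intersecting these two arcs on the cycle. Done carefully this gives $T(\ell) = 2k - 1 - \ell$ as long as $\ell \le n - 2k - 1$, after which $T$ flattens to the constant value $4k - n$. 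The decisive consequence is the comparison of $\ell = 1$ with $\ell = 2$: one gets $T(1) = 2k - 2$ and $T(2) = 2k - 3$ exactly when $2 \le n - 2k - 1$, i.e. when $n \ge 2k + 3$, so in that regime a length-one chord and a length-two chord sit on different numbers of triangles and no automorphism can swap them. To finish I would verify the elementary implication that ``not (i) and not (ii)'' forces $n \ge 2k + 3$: if $n \le 2k$ or $n = 2k + 1$ then $k \ge \lfloor n/2 \rfloor$, which is case (i); and if $n = 2k + 2$ then $n$ is even with $k = \frac{n}{2} - 1$, which is case (ii); excluding both leaves only $n \ge 2k + 3$, where $T(1) \ne T(2)$ and both chord lengths are present.

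The delicate step is the common-neighbor count near the threshold $n \approx 2k$: because the neighborhood of a vertex consists of two arcs that can overlap a shifted copy of themselves on \emph{both} sides, the triangle count ceases to decrease and flattens to $4k - n$ precisely when $n \le 2k + 2$. This flattening is not an artifact — it is exactly what renders cases (i) and (ii) edge-transitive (there $T$ is already constant on $\{1,\ldots,k\}$) — so the modular arithmetic must be handled attentively to land the cutoff at $n = 2k + 3$ rather than off by one. I would also flag the boundary convention $k = 1$: since $C_n^1 = C_n$ is a cycle and hence edge-transitive for every $n$, the statement is meant for $k \ge 2$ (for $k = 1$ only $n \le 4$ satisfy (i) or (ii)), and the length-one-versus-length-two comparison requires $k \ge 2$ in any case.
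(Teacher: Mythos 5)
Your proposal is correct and uses the same two ingredients as the paper: sufficiency by identifying case (i) as $K_{n}$ and case (ii) as $K_{n}$ minus a perfect matching, and necessity via the automorphism-invariant count of triangles through an edge. The difference is which chords you compare, and your choice is the more careful one. The paper compares a chord of length $1$ with a chord of length $k$, asserting these lie in $2k-1$ and $k-1$ triangles respectively; both counts are imprecise. The count $2k-1$ is off by one, since the listed triangle $(v_{i},v_{i+1},v_{i+k+1})$ is not a triangle when the graph is not complete ($v_{i}\not\sim v_{i+k+1}$ for $n\geq 2k+2$); and the count $k-1$ for a length-$k$ chord silently assumes $n\geq 3k+1$, because for $2k+3\leq n\leq 3k$ there are additional wrap-around common neighbors (in $C_{9}^{3}$ the chord $\{v_{0},v_{3}\}$ lies in $3$ triangles, not $2$). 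Worse, the paper's necessity claim is phrased for all $k<\left\lfloor \frac{n}{2}\right\rfloor$, which literally includes case (ii), where its purported counts would contradict its own sufficiency argument; the resolution is exactly the flattening phenomenon you isolate, since at $n=2k+2$ every chord lies in $2k-2$ triangles. Your comparison of lengths $1$ and $2$ avoids all of this: the clean counts $T(1)=2k-2$ and $T(2)=2k-3$ hold precisely when $n\geq 2k+3$, and your reduction showing that the failure of (i) and (ii) forces $n\geq 2k+3$ is exactly right, so your argument lands the cutoff where the paper's does not. (The paper's conclusion does survive correction, since the true counts $2k-2$ and $4k-n$, respectively $k-1$, still differ for $k\geq 2$ and $n\neq 2k+2$, but that verification is absent from the paper.) Two small remarks: your orbit reduction for chords of equal length follows from the rotation automorphisms of the circulant rather than from vertex-transitivity in the abstract, though nothing in your proof depends on it; and your flag about $k=1$ is a genuine boundary issue, since $C_{n}^{1}=C_{n}$ is edge-transitive for every $n$ while satisfying neither (i) nor (ii) when $n\geq 5$, an exclusion the paper leaves implicit as well.
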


\begin{proof}
Since $C_{n}^{k}$ is complete when $k\geq \left\lfloor \frac{n}{2}%
\right\rfloor $, it follows immediately that these graphs edge-transitive.
If $n=2m$ then the graph $C_{2m}^{m-1}$ is a complete graph minus a perfect
matching, which is also edge transitive.

Next we will show that $C_{n}^{k}$ where $k<\left\lfloor \frac{n}{2}%
\right\rfloor $ is not edge-transitive by showing that different edges are
contained in a different number of triangles. The edge $\left(
v_{i},v_{i+1}\right) $ is contained in $2k-1$ triangles:\ $\left(
v_{i-k+1},v_{i,}v_{i+1}\right) $, $\left( v_{i-k+2},v_{i,}v_{i+1}\right) $,
\dots , $\left( v_{i-1},v_{i,}v_{i+1}\right) $, $\left(
v_{i},v_{i+1,}v_{i+2}\right) $, $\left( v_{i},v_{i+1,}v_{i+3}\right) $,
\dots , $\left( v_{i},v_{i+1,}v_{i+k+1}\right) $. However the edge $\left(
v_{i},v_{i+k}\right) $ is only contained in $k-1$ triangles $\left\{ \left(
v_{i},v_{j,}v_{i+k}\right) |i+1\leq j\leq i+k-1\right\} $.
\end{proof}

\indent\newline
We use arguments similar to those in the last proof to demonstrate a large
class of circulant graphs that are \textit{not} edge-transitive.

\begin{theorem}
Let $n\in \mathbf{N}$, $n\geq 5$ and $S=\{2,3,\dots ,\lceil {\frac{n}{2}}%
\rceil -1\}$. Then circulants of the following form are \textit{not}
edge-transitive: 
\begin{equation*}
C_{2n}(A),\mbox{ }A=\{1,i,i+1,n\}\cup S^{\prime }%
\mbox{ for any $S'
\subseteq S$ and any $i, i+1 \in S$}
\end{equation*}
\end{theorem}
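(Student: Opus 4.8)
The plan is to reuse the triangle-counting strategy from the preceding theorem: since every graph automorphism carries triangles to triangles, in an edge-transitive graph all edges lie in the same number of triangles. Hence it suffices to exhibit two edges of $C_{2n}(A)$ that lie in different numbers of triangles. I would compare the edge of chord length $n$ against the edge of chord length $1$. Both exist, since $1,n\in A$. The edge of length $n$ is the natural candidate for the ``special'' edge: because $-n\equiv n\pmod{2n}$, the length-$n$ chords form a perfect matching (each vertex $v_j$ has the single antipodal neighbor $v_{j+n}$), and I expect this edge to be contained in no triangle at all.

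First I would show that no triangle contains a chord of length $n$. By vertex-transitivity of the circulant it is enough to treat the edge $\{v_0,v_n\}$ and ask for a common neighbor $w$. Writing $d_0$ for the chord length of $\{v_0,w\}$ and $d_n$ for the chord length of $\{v_n,w\}$, a short case analysis according to whether $w\in\{1,\dots,n-1\}$ or $w\in\{n+1,\dots,2n-1\}$ shows in both cases that $d_0,d_n\in A\setminus\{n\}$ and $d_0+d_n=n$. The crucial point is that every element of $A\setminus\{n\}$ lies in $\{1,2,\dots,\lceil n/2\rceil-1\}$; this is exactly where the hypotheses $1\in A$, $i,i+1\in S$, and $S'\subseteq S$ are used. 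Consequently $d_0+d_n\le 2(\lceil n/2\rceil-1)\le n-1<n$, contradicting $d_0+d_n=n$. Therefore the length-$n$ edge lies in $0$ triangles.

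Second I would exhibit a single triangle to show that a different edge lies in a positive number of triangles. The vertices $v_0,v_i,v_{i+1}$ form a triangle, because $\{v_0,v_i\}$ has chord length $i\in A$, $\{v_0,v_{i+1}\}$ has chord length $i+1\in A$, and $\{v_i,v_{i+1}\}$ has chord length $1\in A$. In particular the chord-length-$1$ edge $\{v_i,v_{i+1}\}$ lies in at least one triangle. Since the length-$1$ edge is in at least one triangle while the length-$n$ edge is in none, no automorphism can map one to the other, so $C_{2n}(A)$ is not edge-transitive.

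I expect the only genuine work to be in the first step, the verification that the antipodal chord is triangle-free; everything there turns on the uniform bound $\lceil n/2\rceil-1$ on the non-antipodal chord lengths, which forces any two of them to sum to strictly less than $n$ and so prevents them from reaching across the diameter. This is also where the specific shape of $A$ (all chords but $n$ being ``small'') matters, and the argument is robust to the arbitrary choice of $S'$ precisely because the bound applies to every element of $A\setminus\{n\}$. The remaining steps are immediate, and $n\ge 5$ guarantees that the chord lengths $1$ and $n$ are distinct so that the comparison is meaningful.
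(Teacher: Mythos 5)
Your proposal is correct and is essentially the paper's own argument: the paper likewise observes that the antipodal edge $\{1,n+1\}$ lies in no triangle because every other chord has length at most $\lceil n/2\rceil-1$ (so two such chords cannot sum to $n$), while the chord-length-$1$ edge lies in the triangle $\{1,2,i+2\}$, which is your triangle $\{v_0,v_i,v_{i+1}\}$ up to rotation. Your write-up just makes explicit the case analysis and the bound $2(\lceil n/2\rceil-1)\le n-1$ that the paper leaves implicit.
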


\begin{proof}
Let the vertices of $C_{2n}(A)$ be labelled 1 through $2n$. The edge $%
e_{1}=\{1,n+1\}$ is never contained in a 3-cycle, because the longest chord
is of length $\lceil \frac{n}{2}\rceil -1$, so 1 and $n+1$ share no common
neighbors. However, $e_{2}=\{1,2\}$ is always contained in the 3-cycle $%
\{1,2,i+2\}$.
\end{proof}

\noindent\newline
We proceed to introduce four classes of 3-circulant graphs that are
edge-transitive, the first three of which we demonstrate to be
edge-transitive by constructing explicit isomorphisms. Note that the edges
of 3-circulants split up into at most three orbits, with chords of the same
length belonging to the same orbit, so to prove edge-transitivity, it
suffices to choose a chord of one length and show that it can be mapped to a
chord of the second length and to a chord of the third length.

\begin{theorem}
Graphs in the following classes are edge-transitive: \noindent \newline
\noindent \newline
Class 1: $C_{20+16(n-1)}(1,2+4n,1+8n)$, $n=1,2,3,\dots $ \noindent \newline
\noindent \newline
Class 2: $C_{28+16(n-1)}(1,2+4n,5+8n)$, $n=1,2,3,\dots $ \noindent \newline
\noindent \newline
Class 3: $C_{12+6(n-1)}(1,2n+1,2n+3)$, $n=1,2,3,\dots $ \noindent \newline
\noindent \newline
Class 4: $C_{9+3(n-1)}(1,1+n,3+n)$, $n=1,2,3,\dots $\noindent \newline
\end{theorem}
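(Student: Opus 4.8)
The plan is to exploit the rotation automorphism $\rho\colon v_i \mapsto v_{i+1}$, which every circulant admits, so that all chords of a fixed length lie in a single edge-orbit, exactly as noted in the remark preceding the statement. Writing $C_N(a,b,c)$ for a typical member of a class, with $N$ and $(a,b,c)$ given by the displayed formulas, the three chord lengths $a,b,c$ therefore produce at most three edge-orbits, and edge-transitivity is equivalent to producing automorphisms that merge these three orbits. Concretely, it suffices to exhibit one automorphism carrying a length-$a$ chord to a length-$b$ chord and another carrying a length-$a$ chord to a length-$c$ chord; combined with $\rho$ this collapses everything to a single orbit.

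The first tool I would reach for is the family of affine maps $\mu_\lambda\colon v_i \mapsto v_{\lambda i}$ with $\lambda \in (\mathbb{Z}/N\mathbb{Z})^\times$. Such a map is an automorphism of $C_N(a,b,c)$ precisely when $\lambda \cdot S = S$, where $S = \{\pm a, \pm b, \pm c\} \pmod N$ is the connection set, and in that case $\mu_\lambda$ permutes the three lengths according to how $\lambda$ acts on $S$. For those members whose connection set is closed under a unit that cycles the three lengths -- for instance $C_9(1,2,4)$ in Class 4, where $\lambda = 2$ sends $1 \mapsto 2 \mapsto 4 \mapsto 8 \equiv -1$ and hence cyclically permutes the lengths -- this single multiplier already establishes edge-transitivity. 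I expect the defining parameters of each class to be arranged so that, modulo sign and modulo $N$, the triple $\{a,b,c\}$ is as close to multiplicatively closed as possible.

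The main obstacle is that multipliers are not enough in general: multiplication by a unit preserves $\gcd(\ell, N)$ for each chord $\ell$, so whenever the three lengths do not all share the same gcd with $N$, no $\mu_\lambda$ can map one orbit onto another across that gcd boundary. This genuinely occurs in these families -- e.g. in Class 1, writing $N = 16n+4 = 4(4n+1)$, the chords $a=1$ and $c = 8n+1$ are coprime to $N$ while $b = 4n+2$ satisfies $\gcd(b,N) = 2$ -- so the length-$b$ orbit cannot be reached from the others by any multiplier. To cross this boundary I would construct an explicit, non-affine automorphism $\sigma$ of $\mathbb{Z}_N$, defined by a closed-form (likely piecewise) formula in $i$ and $n$, and verify the defining condition directly, namely that $\sigma(i) - \sigma(j) \in S$ for every $i-j \in S$, checked generator by generator. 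Arranging $\sigma$ to send a length-$b$ chord to a length-$a$ chord then merges the last orbit. For the degenerate members it will be cleaner to identify the graph outright -- for example $C_{12}(1,3,5) = K_{6,6}$ and $C_9(1,2,4) = K_{3,3,3}$ -- since complete (multi)partite graphs are classically edge-transitive and serve as anchors and sanity checks.

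The step I expect to be hardest is precisely this last one: producing the non-affine automorphisms in a single formula valid for all $n$, and verifying that the formula respects the connection set for every residue class of $i$ modulo $N$ as the rewiring closes up around the cycle. This is where the otherwise opaque coefficients in $20+16(n-1)$, $28+16(n-1)$, $12+6(n-1)$, and $9+3(n-1)$ earn their keep: they are exactly the moduli for which such a uniform rewiring of a length-$b$ chord into a length-$a$ chord exists and is globally consistent, and I would organize the proof class by class around exhibiting that rewiring explicitly.
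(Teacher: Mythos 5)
Your framework is sound and is in fact the same one the paper uses: the rotation collapses each chord length into a single orbit, so it suffices to exhibit automorphisms carrying a chord of one length onto chords of the other two lengths; and your gcd observation correctly explains why multipliers $v_i \mapsto v_{\lambda i}$ cannot do this for Classes 1--3. The genuine gap is that your proof stops exactly where it has to start: the non-affine automorphisms are never written down, only postulated, and your closing claim that the moduli $20+16(n-1)$, $28+16(n-1)$, $12+6(n-1)$, $9+3(n-1)$ ``are exactly'' those for which a globally consistent rewiring exists is circular --- that existence \emph{is} the theorem. The paper's proof consists precisely of this missing content: for Classes 1 and 2 it defines maps $\phi_1,\phi_2$ on the residues $i=1,2,3,4$ and extends them $4$-periodically ($\phi(i+4k)=\phi(i)+4k \bmod 16n+4$, resp.\ $\bmod\ 16n+12$); for Class 3 it defines maps on $i=1,\dots,2n+2$ extended $(2n+2)$-periodically; and in each case it verifies $N(\phi(i))=\phi(N(i))$ on a fundamental set of residues. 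Without producing such formulas and running that verification, nothing distinguishes these parameter families from generic 3-circulants, for which edge-transitivity fails.

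A second, smaller gap is Class 4. Your multiplier idea does work there, but only partially: taking $\lambda=n+1$ one has $(n+1)^2-(n+3)=(n+2)(n-1)$ and $(n+1)(n+3)+1=(n+2)^2$, so $\lambda$ cycles the three lengths modulo $3n+6=3(n+2)$ exactly when $3\mid n-1$; and your anchors $C_9(1,2,4)=K_{3,3,3}$ and $C_{12}(1,3,5)=K_{6,6}$ are just the cases $n=1,2$. A graph such as $C_{15}(1,4,6)$ (the case $n=3$, where $\gcd(6,15)=3$ blocks every multiplier from reaching the length-$6$ chords) is covered by neither device. The paper closes Class 4 by a different and uniform route: with $k=n+2$ the graphs are $C_{3k}(1,k-1,k+1)\cong W(k,3)$, and wreath graphs are edge-transitive by Onkey's result. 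That identification handles all $n$ at once, and is worth adopting rather than patching the multiplier argument residue class by residue class.
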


\begin{proof}
For classes 1, 2, and 3, we construct explicit isomorphisms:

\noindent \newline
\underline{Class 1:} \noindent \newline
It suffices to show that there exists an isomorphism $\phi _{1}$ mapping the
edge $e_{1}=\{1,2\}$ to the edge $e_{2}=\{1,3+4n\}$ and an isomorphism $\phi
_{2}$ mapping the edge $e_{1}=\{1,2\}$ to the edge $e_{3}=\{1,2+8n\}$. We
provide $\phi _{1}$ and $\phi _{2}$ explicitly:

\noindent \newline
$\phi _{1}:1\mapsto 1$ \noindent \newline
$\phi _{1}:2\mapsto 3+4n$ \noindent \newline
$\phi _{1}:3\mapsto 12n+6$ \noindent \newline
$\phi _{1}:4\mapsto 4$ \noindent \newline
$\phi _{1}:(i+4k)\mapsto \phi (i)+4k$ mod $16n+4$, $i=1,2,3,4$

\noindent \newline
To show that $\phi _{1}$ is indeed an isomorphism, we show that $N(\phi
_{1}(i))=\phi _{1}(N(i))$ for, without loss of generality, $i=1,2,3,4$:

\noindent \newline
$N(\phi _{1}(1))=N(1)=\{2,16n+4,4n+3,8n+2,12n+3,8n+4\}$ $=\phi _{1}(N(1))$

\noindent \newline
$N(2)=\{3,1,4n+4,8n+3,12n+4,8n+5\}$ \noindent \newline
$\phi _{1}(N(2))=\{12n+6,1,4n+4,4n+2,12n+4,8n+5\}=N(\phi _{1}(2))$

\noindent \newline
$N(3)=\{4,2,4n+5,8n+4,12n+5,8n+6\}$ \noindent \newline
$\phi _{1}(N(3))=\{4,4n+3,4n+5,8n+4,12n+5,12n+7\}=N(\phi _{1}(3))$

\noindent \newline
$N(\phi _{1}(4))=N(4)=\{5,3,4n+6,8n+5,12n+6,8n+7\}=\phi _{1}(N(4))$

\noindent \newline
$\phi _{2}:1\mapsto 8n+2$ \noindent \newline
$\phi _{2}:2\mapsto 1$ \noindent \newline
$\phi _{2}:3\mapsto 8n+4$ \noindent \newline
$\phi _{2}:4\mapsto 3$ \noindent \newline
$\phi _{2}:(i+4k)\mapsto \phi (i)+4k$ mod $16n+4$, $i=1,2,3,4$

\noindent \newline
To show that $\phi _{2}$ is indeed an isomorphism, we show that $N(\phi
_{2}(i))=\phi _{2}(N(i))$ for, without loss of generality, $i=1,2,3,4$:

\noindent \newline
$\phi _{2}(N(1))=\{1,16n+3,12n+4,8n+1,4n,8n+3\}=N(\phi _{2}(1))$

\noindent \newline
$\phi _{2}(N(2))=\{8n+4,8n+2,4n+3,16n+4,12n+3,2\}=N(\phi _{2}(2))$

\noindent \newline
$\phi _{2}(N(3))=\{3,1,12n+6,8n+3,4n+2,8n+5\}=N(\phi _{2}(3))$

\noindent \newline
$\phi _{2}(N(4))=\{8n+6,8n+4,4n+5,2,12n+5,4\}=N(\phi _{2}(4))$

\noindent \newline
\underline{Class 2:} \noindent \newline
It suffices to show that there exists an isomorphism $\phi _{1}$ mapping the
edge $e_{1}=\{1,2\}$ to the edge $e_{2}=\{1,3+4n\}$ and an isomorphism $\phi
_{2}$ mapping the edge $e_{1}=\{1,2\}$ to the edge $e_{3}=\{1,6+8n\}$. We
provide $\phi _{1}$ and $\phi _{2}$ explicitly:

\noindent \newline
$\phi _{1}:1\mapsto 4n+3$ \noindent \newline
$\phi _{1}:2\mapsto 1$ \noindent \newline
$\phi _{1}:3\mapsto 2$ \noindent \newline
$\phi _{1}:4\mapsto 12n+12$ \noindent \newline
$\phi _{1}:(i+4k)\mapsto \phi (i)+4k$ mod $16n+12$, $i=1,2,3,4$

\noindent \newline
To show that $\phi _{1}$ is indeed an isomorphism, we show that $N(\phi
_{1}(i))=\phi _{1}(N(i))$ for, without loss of generality, $i=1,2,3,4$:

\noindent \newline
$N(1)=\{2,16n+12,4n+3,8n+6,12n+11,8n+8\}$ \noindent \newline
$\phi _{1}(N(1))=\{1,12n+8,4n+2,8n+5,12n+10,4n+4\}=N(\phi _{1}(1))$

\noindent \newline
$N(2)=\{3,1,4n+4,8n+7,12n+12,8n+9\}$ \noindent \newline
$\phi _{1}(N(2))=\{2,4n+3,16n+12,8n+6,8n+8,12n+11\}=N(\phi _{1}(2))$

\noindent \newline
$N(3)=\{4,2,4n+5,8n+8,12n+13,8n+10\}$ \noindent \newline
$\phi _{1}(N(3))=\{12n+12,1,8n+7,4n+4,3,8n+9\}=N(\phi _{1}(3))$

\noindent \newline
$N(4)=\{5,3,4n+6,8n+9,12n+14,8n+11\}$ \noindent \newline
$\phi _{1}(N(4))=\{4n+7,2,4n+5,12n+11,12n+13,8n+10\}=N(\phi _{1}(4))$

\noindent \newline
$\phi _{2}:1\mapsto 8n+6$ \noindent \newline
$\phi _{2}:2\mapsto 1$ \noindent \newline
$\phi _{2}:3\mapsto 8n+8$ \noindent \newline
$\phi _{2}:4\mapsto 3$ \noindent \newline
$\phi _{2}:(i+4k)\mapsto \phi (i)+4k$ mod $16n+12$, $i=1,2,3,4$

\noindent \newline
To show that $\phi _{2}$ is indeed an isomorphism, we show that $N(\phi
_{2}(i))=\phi _{2}(N(i))$ for, without loss of generality, $i=1,2,3,4$:

\noindent \newline
$\phi _{2}(N(1))=\{1,16n+11,12n+8,8n+5,4n+4,8n+7\}$ $=N(\phi _{2}(1))$

\noindent \newline
$\phi _{2}(N(2))=\{8n+8,8n+6,4n+3,16n+12,12n+11,2\}$ $=N(\phi _{2}(2))$

\noindent \newline
$\phi _{2}(N(3))=\{3,1,12n+10,8n+7,4n+6,8n+9\}=N(\phi _{2}(3))$

\noindent \newline
$\phi _{2}(N(4))=\{8n+10,8n+8,4n+5,2,12n+13,4\}=N(\phi _{2}(4))$

\noindent \newline
\underline{Class 3:} \noindent \newline
It suffices to show that there exists an isomorphism $\phi _{1}$ mapping the
edge $e_{1}=\{1,2\}$ to the edge $e_{2}=\{1,2n+2\}$ and an isomorphism $\phi
_{2}$ mapping the edge $e_{1}=\{1,2n+2\}$ to the edge $e_{3}=\{1,2n+4\}$. We
provide $\phi _{1}$ and $\phi _{2}$ explicitly:

\noindent \newline
$\phi _{1}:1\mapsto 2n+2$ \noindent \newline
$\phi _{1}:2\mapsto 1$ \noindent \newline
$\phi _{1}:3\mapsto 2$ \noindent \newline
$\phi _{1}:4\mapsto 3$ \noindent \newline
$\phi _{1}:2n+2\mapsto 2n+1$ \noindent \newline
$\phi _{1}:(i+(2n+2)k)\mapsto \phi (i)+(2n+2)k$ mod $6n+6$, $i=1,2,3,\dots
,2n+2$

\noindent \newline
To show that $\phi _{1}$ is indeed an isomorphism, we show that $N(\phi
_{1}(i))=\phi _{1}(N(i))$ for, without loss of generality, $i=1,2,k,2n+2$,
where $k\in \{3,4,\dots ,2n+1\}$

\noindent \newline
$N(1)=\{2,6n+6,2n+2,2n+4,4n+6,4n+4\}$ \noindent \newline
$\phi _{1}(N(1))=\{1,6n+5,2n+1,2n+3,4n+5,4n+3\}=N(\phi _{1}(1))$

\noindent \newline
$N(2)=\{1,3,2n+3,2n+5,1-2n,-1-2n\}$ \noindent \newline
$\phi _{1}(N(2))=\{2n+2,2,4n+4,2n+4,4n+6,6n+6\}=N(\phi _{1}(2))$

\noindent \newline
$N(k)=\{k+1,k-1,2n+k+1,2n+k+3,k-2n-1,k-2n-3\}$ \noindent \newline
$\phi _{1}(N(k))=\{k,k-2,2n+k,2n+k+2,k-2n-2,k-2n-4\}=N(\phi _{1}(k))$

\noindent \newline
$N(2n+2)=\{2n+1,2n+3,4n+3,4n+5,1,6n+5\}$ \noindent \newline
$\phi _{1}(N(2n+2))=\{2n,4n+4,4n+2,6n+6,2n+2,6n+4\}=N(\phi _{1}(2n+2))$

\noindent \newline
$\phi _{2}:1\mapsto 2n+4$ \noindent \newline
$\phi _{2}:2\mapsto 3$ \noindent \newline
$\phi _{2}:3\mapsto 4$ \noindent \newline
\noindent $\phi _{2}:4\mapsto 5$

\noindent $\phi _{2}:2n+1\mapsto 2n+2$ \noindent \newline
$\phi _{2}:2n+2\mapsto 1$ \noindent \newline
$\phi _{2}:(i+(2n+2)k)\mapsto \phi (i)+(2n+2)k$ mod $6n+6$, $i=1,2,3,\dots
,2n+2$

\noindent \newline
To show that $\phi _{2}$ is indeed an isomorphism, we show that $N(\phi
_{2}(i))=\phi _{2}(N(i))$ for, without loss of generality, $%
i=1,2,k,2n+1,2n+2 $, $k\in \{3,4,\dots ,2n\}$:

\noindent \newline
$\phi _{2}(N(1))=\{3,4n+5,1,2n+5,4n+7,2n+3\}=N(\phi _{2}(1))$

\noindent \newline
$\phi _{2}(N(2))=\{2n+4,4,2,2n+6,4n+8,4n+6\}=N(\phi _{2}(2))$

\noindent \newline
$\phi _{2}(N(k))=\{k,k+2,2n+k+2,2n+k+4,k-2n,k-2n-2\}$ $=N(\phi _{2}(k))$

\noindent \newline
$\phi _{2}(N(2n+1))=\{2n+1,1,4n+3,2n+3,4n+5,6n+5\}$ $=N(\phi _{2}(2n+1))$

\noindent \newline
$\phi _{2}(N(2n+2))=\{2n+2,2,4n+4,4n+6,2n+4,6n+6\}$ $=N(\phi _{2}(2n+2))$

\noindent \newline
\underline{Class 4:} \noindent \newline
Let $k=n+2$. Then by a result by Onkey \cite{Onkey} the graphs in Class 4
are of the form $C_{3k}(1,k-1,k+1)$, which is isomorphic to $W(k,3)$.
\end{proof}

\bigskip

The following theorem given by Wilson and Poto\u{c}nik \cite{Wilson}
presented a characterization of $4$-regular edge-transitive circulant graphs:

\begin{theorem}
If $G$ is an undirected 4-regular edge-transitive circulant graph with $n$
vertices, then either:

(1) $G\cong C_{n}(1,a)$ for some $a$ such that $a^{2}\equiv\pm 1(\func{mod}%
n) $, or

(2) $n$ is even, $n=2m$, and $G\cong C_{2m}(1,m+1)$.
\end{theorem}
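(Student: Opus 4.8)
The plan is to use the Cayley/circulant structure directly. A $4$-regular circulant is $G=C_n(S)$ for a symmetric connection set $S\subseteq\mathbb{Z}_n\setminus\{0\}$ with $|S|=4$; since a symmetric set containing the involution $n/2$ has odd size, we must have $S=\{a,-a,b,-b\}$ for two distinct chord lengths $a\neq b$, and $G$ connected gives $\gcd(a,b,n)=1$. The translations $x\mapsto x+c$ are automorphisms and already act transitively on the edges of each fixed chord length, so under translations alone the edge set splits into exactly two orbits: the $a$-edges and the $b$-edges. Since circulants are vertex-transitive, $G$ is edge-transitive if and only if $\mathrm{Aut}(G)$ contains an element carrying some $a$-edge to some $b$-edge. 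Thus the entire problem reduces to understanding these \emph{chord-mixing} automorphisms, and the theorem amounts to classifying when one exists.

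First I would dispose of the \emph{affine} chord-mixing automorphisms, i.e.\ maps $x\mapsto ux+c$ with $u\in\mathbb{Z}_n^{*}$; such a map is an automorphism of $C_n(S)$ precisely when $uS=S$, and it sends a difference $a$ to $ua$. One shows that at least one chord length must be coprime to $n$ (otherwise the number and sizes of the connected components of the single-chord subgraphs $C_n(a)$ and $C_n(b)$ are natural invariants distinguishing the two edge classes), so after multiplying $S$ by the inverse of a unit generator we may normalize $G\cong C_n(1,c)$. Now if an affine map mixes the chords, then $u\cdot 1\equiv\pm c$ forces $u\equiv\pm c$, and $uS=S$ forces $uc\equiv\pm c^{2}\in\{\pm1,\pm c\}$; since $c\neq\pm1$ this yields $c^{2}\equiv\pm1\pmod n$, which is exactly conclusion (1). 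Hence every circulant that is edge-transitive ``by a multiplier'' lands in case (1), and conversely $c^{2}\equiv\pm1$ manufactures the required multiplier.

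The remaining, genuinely hard case is when $G$ is edge-transitive but \emph{no} affine automorphism mixes the chords, e.g.\ $G\cong C_n(1,c)$ with $c^{2}\not\equiv\pm1\pmod n$ yet the graph is still edge-transitive via a non-affine symmetry (note that $m+1$ is a unit mod $2m$ only when $m$ is even, so for $m$ odd the family $C_{2m}(1,m+1)$ is precisely of this exotic kind). To pin these down I would compute local invariants that every edge must share: the common-neighbor count of an edge's endpoints, that is $|S\cap(a+S)|$ versus $|S\cap(b+S)|$, and more finely the isomorphism type of the subgraph induced on a neighborhood, which is governed by whether each of $2a,\,2b,\,a-b,\,a+b$ lies in $S$. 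Matching these forces a small system of modular equations, and I expect the only solutions not already explained by case (1) to collapse to $n=2m$ with $\{a,b\}$ equivalent to $\{1,m-1\}$, i.e.\ $G\cong C_{2m}(1,m+1)$; one then exhibits the non-affine automorphism fixing $0$ and exchanging the two chord directions to confirm edge-transitivity, giving conclusion (2).

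I expect the main obstacle to be exactly this non-affine regime: proving that outside case (1) \emph{no} exotic chord-mixing symmetry exists beyond the single family $C_{2m}(1,m+1)$. The elementary triangle- and neighborhood-counting above yields only necessary conditions, so closing the gap requires the finer structure theory of automorphism groups of circulants (Sabidussi/Muzychuk-type criteria for when $\mathrm{Aut}(C_n(S))$ fails to reduce to the affine group), which is where the real content of the Wilson--Poto\u{c}nik argument lies.
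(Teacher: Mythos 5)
First, a point of reference: the paper does not prove this theorem at all. It is stated as a quoted result of Wilson and Poto\u{c}nik \cite{Wilson}, so there is no internal argument to compare yours against; your proposal must stand on its own as a proof, and it does not. Your opening reduction is sound: a connected $4$-regular circulant is $C_{n}(a,b)$ with $a\neq \pm b$, translations already act transitively on each chord class, so edge-transitivity is equivalent to the existence of a single automorphism carrying an $a$-edge to a $b$-edge. Your affine analysis is also essentially correct (with the small caveat that a multiplier $u$ must be a unit, so $u\equiv \pm c$ already forces $c$ to be a unit, which is what lets you cancel $c$ in $c^{2}\equiv \pm c$ and conclude $c^{2}\equiv \pm 1 \pmod{n}$). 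But the entire content of the theorem is the other direction: showing that an edge-transitive example admitting \emph{no} affine chord-mixing automorphism must be $C_{2m}(1,m+1)$. On this point you offer only an expectation ("I expect the only solutions \dots to collapse to $n=2m$") together with a pointer to unspecified Sabidussi/Muzychuk-type structure theory that you never invoke in any concrete form. Counting common neighbors and comparing neighborhood subgraphs yields necessary conditions that both chord classes must satisfy, but such local invariants can never certify the \emph{nonexistence} of a global, non-affine automorphism; that is precisely the regime where the Wilson--Poto\u{c}nik argument does its real work, and you have not reproduced any version of it. The hard half of the theorem is therefore missing, by your own admission.

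A secondary gap: your normalization to $C_{n}(1,c)$ rests on the claim that some chord length is coprime to $n$, justified by saying that otherwise the component counts of the single-chord subgraphs $C_{n}(a)$ and $C_{n}(b)$ distinguish the two edge classes. As written this is circular: an automorphism realizing edge-transitivity carries one $a$-edge to one $b$-edge, but nothing yet says it carries the set of \emph{all} $a$-edges onto the set of all $b$-edges, and "the subgraph formed by all edges of the same chord length as $e$" is an algebraic construction, not a graph-theoretic invariant attached to the edge $e$. The claim is true, but it requires an argument and is in fact part of what the cited proof establishes. In short: the skeleton is reasonable and the easy (affine) half is done, but the essential classification step is conjectured rather than proved, so this proposal is a plan for a proof, not a proof.
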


From Onkey \cite{Onkey}, we have the following theorem for 6-regular
circulant graphs:

\begin{theorem}
Let $G=C_{n}(1,a,b)$, $a\neq b$, be a $6$-regular circulant. If the
following three conditions are \textit{all} satisfied, then $G$ is
edge-transitive:

\begin{enumerate}
\item $ab\equiv\pm1$ mod $n$

\item $a^{2}\equiv\pm b$ mod $n$

\item $b^{2}\equiv\pm a$ mod $n$
\end{enumerate}
\end{theorem}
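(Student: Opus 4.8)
The plan is to exploit \emph{multiplier automorphisms} of circulants. Recall that for a unit $u \in \mathbb{Z}_n^{\ast}$ the map $\mu_u : x \mapsto ux \pmod n$ is an automorphism of $C_n(L)$ exactly when multiplication by $u$ preserves the symmetric connection set $S = \{\pm 1, \pm a, \pm b\}$: an edge $\{i,j\}$ (characterized by $j - i \in S$) is sent to $\{ui, uj\}$ with difference $u(j-i)$, which lies in $S$ if and only if $uS = S$. As noted just before the statement, the edges of a $3$-circulant break into at most three orbits under the translation subgroup $x \mapsto x + c$, namely the chords of length $1$, $a$, and $b$, and translations already act transitively within each length class. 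Hence it suffices to produce a single automorphism that permutes these three length classes among themselves.

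First I would note that condition (1), $ab \equiv \pm 1 \pmod n$, makes $a$ a unit (with inverse $\pm b$), so $\mu_a$ is a genuine bijection of $\mathbb{Z}_n$. Next I would track the generators under $\mu_a$: we have $1 \mapsto a$, then $a \mapsto a^2 \equiv \pm b$ by condition (2), and $b \mapsto ab \equiv \pm 1$ by condition (1). Since chord length and membership in $S$ are insensitive to sign, this shows $a\,S = \{\pm a, \pm b, \pm 1\} = S$, so $\mu_a$ is an automorphism of $G$. Moreover $\mu_a$ cyclically permutes the three length classes: it carries $\{0,1\}$ to $\{0,a\}$, carries $\{0,a\}$ to $\{0, a^2\} = \{0, \pm b\}$, and carries $\{0,b\}$ to $\{0, ab\} = \{0, \pm 1\}$. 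Adjoining $\mu_a$ to the translations therefore fuses the three translation-orbits into one, and I would conclude that $\operatorname{Aut}(G)$ acts transitively on $E(G)$, i.e. $G$ is edge-transitive.

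It is worth observing that the argument above uses only conditions (1) and (2); indeed condition (3) follows from them, since $b \equiv \pm a^2$ together with $ab \equiv \pm 1$ gives $a^3 \equiv \pm 1$ and hence $b^2 \equiv a^4 \equiv \pm a$. Symmetrically, conditions (1) and (3) alone would let me run the same argument with $\mu_b$ in place of $\mu_a$. Stating all three keeps the hypotheses symmetric in $a$ and $b$.

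Rather than a deep obstacle, the one point demanding care is the sign bookkeeping: each congruence holds only up to $\pm$, so I must verify $aS = S$ as an equality of symmetric sets, not coordinatewise, and I must use the $6$-regularity hypothesis (equivalently $|S| = 6$, so that $\pm 1, \pm a, \pm b$ are all distinct) to guarantee that $\mu_a$ genuinely moves each length class to a \emph{different} one rather than fixing it. Confirming these distinctness conditions, and that $\mu_a$ is well-defined, is the only real content beyond the short computation.
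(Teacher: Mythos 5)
Your proof is correct, but note that the paper itself never proves this statement: it is quoted as a known result of Onkey (the paper's reference \cite{Onkey}), so there is no in-paper proof to compare against line by line. The closest thing the paper offers is its proof of the later theorem on circulants $C_n(1,a_2,\dots,a_m)$ whose chord lengths together with their negatives form a multiplicative group mod $n$: there the argument is exactly your multiplier idea, ``multiplying the group by any element permutes the group, so the edge map $x\mapsto a_i x$ sends chords of length $1$ to chords of length $a_i$,'' stated in one line and without the bookkeeping you carry out (well-definedness of $\mu_a$ via $a$ being a unit, the set equality $aS=S$ up to signs, and the fusion of the three translation orbits). In fact your side remark that conditions (1) and (2) imply (3) does more than trim hypotheses: it shows that under the stated conditions the set $S=\{\pm 1,\pm a,\pm b\}$ is closed under multiplication mod $n$, and a finite set of units closed under multiplication and containing $1$ is a group, so the Onkey theorem is literally a special case of the paper's group-condition theorem, with your argument supplying the specialization. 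So your route is the same mechanism the paper uses for its own (more general) result, executed with more care; what it buys is an actual self-contained proof of the quoted theorem, plus the observation that the third condition is redundant, which complements the paper's remark that these conditions are sufficient but not necessary.
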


We note the following corollaries can be obtained.

\begin{corollary}
3-circulants of the following forms are edge-transitive:

\begin{enumerate}
\item $C_{\frac{a^{3}\pm1}{d}(1,a,a^{2})}$ assuming $d|(a^{3}\pm1)$

\item $C_{\frac{a^{2}+a+1}{d}}(1,a,a+1)$ assuming $d|(a^{2}+a+1)$
\end{enumerate}
\end{corollary}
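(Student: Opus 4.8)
The plan is to read off each form as an instance of the $6$-regular circulant $C_n(1,a,b)$ from the preceding theorem and simply verify its three congruence conditions. The two forms differ only in the choice of the third generator: for the first we take $b=a^2$, and for the second we take $b=a+1$. In each case the single divisibility hypothesis on $n$ translates into one governing congruence, and I expect that congruence to force all three of the theorem's conditions at once. So the core of the argument is extracting the right relation modulo $n$ and then performing three short substitutions.

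For the first form, I would first record the governing relation. Since $n=\frac{a^{3}\pm 1}{d}$ with $d\mid(a^{3}\pm 1)$, we have $nd=a^{3}\pm 1$, whence $a^{3}\equiv \pm 1 \pmod{n}$ (the congruence taking the value $+1$ when the numerator is $a^3-1$ and $-1$ when it is $a^3+1$). With $b=a^2$ the three conditions then read as follows. Condition~(1) is $ab=a\cdot a^{2}=a^{3}\equiv \pm 1$, which is exactly the governing relation. Condition~(2) is $a^{2}\equiv b$, which holds trivially with the $+$ sign since $b=a^{2}$. Condition~(3) is $b^{2}=a^{4}=a\cdot a^{3}\equiv \pm a$, again by the governing relation. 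Thus all three conditions hold and the theorem applies.

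For the second form, the governing relation comes from $nd=a^{2}+a+1$, so $a^{2}+a+1\equiv 0 \pmod{n}$, i.e. $a^{2}\equiv -(a+1)\pmod{n}$. Taking $b=a+1$, I would check: Condition~(1) is $ab=a(a+1)=a^{2}+a\equiv -1$; Condition~(2) is $a^{2}\equiv -(a+1)=-b$; and Condition~(3) is $b^{2}=(a+1)^{2}=a^{2}+2a+1\equiv -(a+1)+2a+1=a$. All three hold, so the theorem again gives edge-transitivity.

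The only genuinely subtle point, rather than the congruence bookkeeping, is ensuring that the hypotheses of the invoked theorem are actually met: namely that $G=C_n(1,a,b)$ is truly a $6$-regular circulant with $a\neq b$. This requires the chord lengths $1,a,b$ to be distinct nonzero residues modulo $n$, none of them equal to $n/2$, so that each contributes $2$ to the degree. This is a mild constraint on the admissible pairs $(a,d)$, and I would state it explicitly as a standing assumption (or note the degenerate small cases it excludes) so that the verifications above are applied only where the graph really is $6$-regular. Once that is in place, the corollary follows immediately from the two computations.
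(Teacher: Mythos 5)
Your proposal is correct and is exactly the argument the paper intends: the paper states this corollary without proof as an immediate consequence of Onkey's theorem (Theorem 3.6), and your verification of the three congruences $ab\equiv\pm1$, $a^{2}\equiv\pm b$, $b^{2}\equiv\pm a$ modulo $n$ for $b=a^{2}$ and $b=a+1$ is the intended derivation. Your added remark that the chord lengths must be distinct (so the graph is genuinely $6$-regular with $a\neq b$) is a sound precaution that the paper leaves implicit.
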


\begin{example}
Let $a\equiv 1$ mod $3$. Then $C_{\frac{a^{2}+a+1}{3}}(1,a,a+1)$ is
edge-transitive. (Note that if $a\equiv 1$ mod $3$, then $3\nmid a^{2}+a+1$).
\end{example}

\begin{example}
Let $a\equiv 2$ mod $7$ or $a\equiv 4$ mod $7$. Then $C_{\frac{a^{2}+a+1}{7}%
}(1,a,a+1)$ is edge-transitive. (Note that if $a\equiv 2$ mod $7$, and $%
a\equiv 4$ mod $7$ then $7|a^{2}+a+1$).
\end{example}

However we note that the conditions found in Theorem 3.6 are not necessary.
In Theorem 3.4 we presented infinite classes of graphs that do not satisfy
all three of the conditions from Theorem 3.6 but are still edge-transitive.
We believe in some cases that for a graph to be edge-transitive it may only
have to meet a set of weaker conditions.

\begin{conjecture}
Given $C_{n}(1,a,b)$ which is edge-transitive. Then at least one of the
following conditions is satisfied:

\begin{enumerate}
\item $ab\equiv \pm 1$ mod $n$ or

\item $a^{2}\equiv \pm 1$ mod $n$ or

\item $b^{2}\equiv\pm1$ mod $n$
\end{enumerate}
\end{conjecture}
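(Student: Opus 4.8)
The plan is to reduce edge-transitivity to a statement about how the automorphism group fuses the three chord-length classes, and then to split into an ``affine'' case that yields condition (1) by direct computation and a ``non-affine'' case that should yield (2) or (3). Write $S=\{\pm 1,\pm a,\pm b\}$ for the connection set, so that $C_{n}(1,a,b)$ is the Cayley graph $\Gamma(\mathbb{Z}_{n},S)$. The translations $x\mapsto x+c$ are automorphisms acting transitively on vertices, but they preserve chord length; hence under the translation subgroup the edge set breaks into three orbits $E_{1},E_{a},E_{b}$ consisting of chords of length $1$, $a$, and $b$. Edge-transitivity therefore holds if and only if $\mathrm{Aut}(\Gamma)$ contains elements fusing $E_{1}$ with $E_{a}$ and with $E_{b}$; after composing with a translation we may assume such an element $\phi$ fixes the vertex $0$, so that $\phi$ permutes $N(0)=S$ and induces a \emph{transitive} permutation of the three sign-classes $C_{1}=\{\pm 1\}$, $C_{a}=\{\pm a\}$, $C_{b}=\{\pm b\}$. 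A first filter, in the spirit of the triangle-counting arguments used earlier in the paper, is that all local invariants (number of triangles on an edge, i.e. $|S\cap(\ell+S)|$) must agree across the three lengths; the sharper arithmetic conditions, however, will come from analyzing $\phi$ itself.

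First I would dispose of the case in which the fusing automorphisms can be taken to be multipliers $x\mapsto mx$ (equivalently, $\mathrm{Aut}(\Gamma)$ lies in the affine group $\mathbb{Z}_{n}\rtimes\mathbb{Z}_{n}^{\times}$). A multiplier $m$ with $mS=S$ permutes $C_{1},C_{a},C_{b}$, and transitivity forces an element of order $3$, i.e. a $3$-cycle. If $m$ realizes $C_{1}\mapsto C_{a}\mapsto C_{b}\mapsto C_{1}$, then $m\cdot 1\in C_{a}$ gives $m\equiv\pm a$, next $ma\in C_{b}$ gives $a^{2}\equiv\pm b\pmod n$, and finally $mb\in C_{1}$ gives $ab\equiv\pm 1\pmod n$, which is exactly condition (1); the reverse $3$-cycle gives the same conclusion with $a$ and $b$ interchanged. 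One checks that a transposition fixing one class cannot occur for genuinely distinct classes (fixing $C_{b}$ while swapping $C_{1},C_{a}$ would force $ab\equiv\pm b$, hence $a\equiv\pm 1$), so in the affine case edge-transitivity always yields condition (1).

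The substantive work is the non-affine case, where the fusing automorphism fixing $0$ need not be a multiplier and the permutation argument above does not apply directly. Here I would appeal to the structure theory of automorphism groups of circulant graphs. When $n$ is prime, Burnside's theorem on transitive groups of prime degree guarantees that a non-$2$-transitive such group is affine, returning us to the previous paragraph; since our graph is neither complete nor empty this settles the prime case. For composite $n$ the group may be genuinely non-affine, and one expects (as the wreath graphs $W(k,3)=C_{3k}(1,k-1,k+1)$ of Class 4 already illustrate) that the graph then carries a block or wreath decomposition in which one chord length behaves like a near-perfect matching. The plan is to show that in each such structural type the matching-like chord $c\in\{a,b\}$ satisfies $c^{2}\equiv\pm 1\pmod n$, giving condition (2) or (3). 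Concretely, for $C_{12}(1,3,5)=W(4,3)$ one has $b=5$ with $b^{2}=25\equiv 1\pmod{12}$, witnessed not by a multiplier but by the local vertex-swaps inside each bunch of the wreath.

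The hardest step, and the reason this remains a conjecture rather than a theorem, is precisely the control of non-affine automorphism groups for composite $n$: there is no clean classification of $6$-regular edge-transitive circulants analogous to the Wilson--Poto\u{c}nik theorem in the $4$-regular case, so ruling out an edge-transitive $C_{n}(1,a,b)$ whose fusing automorphisms evade all three congruences requires either such a classification or a uniform Schur-ring argument (in the style of Klin--P\"oschel and Muzychuk) bounding how a non-affine point-stabilizer can permute the three chord-classes. I would attempt the Schur-ring route: analyze the smallest S-ring containing $S$, use its primitive/imprimitive decomposition to force the chord set into one of finitely many arithmetic patterns, and verify that each pattern satisfies at least one of (1)--(3). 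Establishing that these are the only patterns is the main obstacle.
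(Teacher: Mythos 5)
This statement is one of the paper's \emph{conjectures}: the paper offers no proof of it (indeed, the conclusion lists proving it as an open problem), so there is no ``paper proof'' to match yours against, and any complete argument would have gone beyond the paper. Your proposal, however, is also not a proof, as you yourself acknowledge. The genuine gap is exactly where you locate it: the non-affine, composite-$n$ case. Everything from ``I would appeal to the structure theory of automorphism groups of circulant graphs'' onward is a research program, not an argument --- you never show that a non-affine edge-transitive $C_{n}(1,a,b)$ actually carries the claimed block/wreath decomposition, never define the ``matching-like chord'' in general, and never carry out the Schur-ring analysis that is supposed to force $c^{2}\equiv \pm 1 \pmod n$; the single verified example $C_{12}(1,3,5)=W(4,3)$ is evidence, not proof. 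Moreover, this missing case is not vacuous or peripheral: the paper's own Theorem 3.4 supplies edge-transitive $3$-circulants whose class-fusing automorphisms are genuinely non-affine. For instance, for $C_{20}(1,6,9)$ (Class 1 with $n=1$), the only multipliers preserving $S=\{\pm 1,\pm 6,\pm 9\}$ are $\pm 1$ and $\pm 9$ (since $6\cdot 6\equiv -4 \pmod{20}$ rules out $\pm 6$, while $9\cdot 6\equiv -6$ and $9\cdot 9\equiv 1$), so the multiplier group only swaps the classes $\{\pm 1\}$ and $\{\pm 9\}$ and is intransitive on the three chord classes; edge-transitivity of this graph is witnessed only by non-multiplier automorphisms. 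So your affine analysis, correct as it is, provably cannot cover all edge-transitive $3$-circulants.

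What you do argue is essentially sound and constitutes real partial progress beyond the paper. The affine case is right: multipliers preserve the sign-classes as blocks, the induced group on three classes is a transitive subgroup of $S_{3}$ and hence contains a $3$-cycle, and chasing $1\mapsto \pm a\mapsto \pm b\mapsto \pm 1$ yields $ab\equiv \pm 1\pmod n$, i.e.\ condition (1). The prime-$n$ reduction via Burnside's theorem is also correct (a non-complete circulant on a prime number of vertices has affine automorphism group), and for prime $n$ all chord lengths are units, so no divisibility subtleties arise. Two repairs are still needed even in the parts you wrote: (i) your claim that a vertex-stabilizing automorphism $\phi$ ``induces a transitive permutation of the three sign-classes'' is unjustified --- an arbitrary $\phi$ fixing $0$ permutes the six neighbors but need not map sign-classes to sign-classes; that block property is automatic only for multipliers, and proving something like it in general is precisely the hard non-affine problem; and (ii) in your transposition subcase, $ab\equiv \pm b \pmod n$ implies $a\equiv \pm 1$ only when $\gcd(b,n)=1$, which is not given --- fortunately that subcase is dispensable, since a transitive subgroup of $S_{3}$ always contains a $3$-cycle. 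In sum: you have a correct proof of the conjecture for $n$ prime and for graphs whose edge-transitivity is realized affinely, and an honest but unexecuted plan for the remaining case; the conjecture itself remains open, in your write-up as in the paper.
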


\begin{conjecture}
Given $C_{n}(1,a_{2},\dots,a_{m})$ which is edge-transitive. Then there
exist $i,j$ (with $i$ possibly equal to $j$) such that $a_{i}a_{j}\equiv\pm1$
mod $n$.
\end{conjecture}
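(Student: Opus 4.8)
The plan is to reduce edge-transitivity to a purely arithmetic statement about the \emph{chord lengths} $\{1=a_{1},a_{2},\dots ,a_{m}\}$ and then to analyze the vertex stabiliser in $\mathrm{Aut}(C_{n}(1,a_{2},\dots ,a_{m}))$. Write $S=\{\pm 1,\pm a_{2},\dots ,\pm a_{m}\}$ for the (symmetric) connection set, so that the neighbours of the vertex $0$ are exactly the elements of $S$. The translations $x\mapsto x+t$ are automorphisms that preserve chord length, so they already act transitively on the edges of any single chord length; consequently the circulant is edge-transitive if and only if the stabiliser $\mathrm{Aut}_{0}$ of the vertex $0$ acts transitively on $S$. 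Since the multiplier $x\mapsto -x$ always lies in $\mathrm{Aut}_{0}$ (because $S$ is symmetric), this is in turn equivalent to $\mathrm{Aut}_{0}$ acting transitively on the set of chord lengths $\{1,a_{2},\dots ,a_{m}\}$. This reduction is the first step.

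The second step is the key \emph{orbit argument}, which settles the conjecture whenever the transitivity in Step~1 is witnessed by multipliers. Let $M=\{\lambda \in \mathbb{Z}_{n}^{\ast }:\lambda S=S\}$ be the multiplier group, acting on chord lengths by $s\mapsto \lambda s$. Suppose $M$ is transitive on $\{1,a_{2},\dots ,a_{m}\}$. Then there is $\lambda \in M$ with $\lambda \cdot 1\equiv \pm a_{2}$, that is $\lambda \equiv \pm a_{2}\pmod{n}$; in particular $a_{2}$ is a unit mod $n$. Because $1\in S$ and $\lambda S=S$, there is an element $x\in S$ with $a_{2}x\equiv \pm 1$, and writing $x\equiv \pm a_{j}$ gives $a_{2}a_{j}\equiv \pm 1\pmod{n}$. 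Moreover $j\neq 1$, since $j=1$ would force $a_{2}\equiv \pm 1$, contradicting that $a_{2}$ is a genuine second chord length. This proves the conjecture in the multiplier-realised case and, taking $m=3$, recovers the previous conjecture as well.

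The remaining task is to show that edge-transitivity of a circulant is \emph{always} realised by multipliers in the sense above, and this is where the real difficulty lies. I would first dispose of the prime case, where the statement is a theorem rather than a hope: for $n=p$ prime and $C_{p}(1,a_{2},\dots ,a_{m})$ not complete, $\mathrm{Aut}$ is a transitive group of prime degree $p$ which is not $2$-transitive (a $2$-transitive automorphism group would force the graph to be complete or empty), so by Burnside's theorem on transitive groups of prime degree $\mathrm{Aut}\le \mathrm{AGL}(1,p)$. Hence every vertex-stabilising automorphism is a multiplier $x\mapsto ax$, the group $M$ is transitive on chord lengths, and Step~2 finishes the prime case.

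The main obstacle is composite $n$, where $\mathrm{Aut}_{0}$ can contain automorphisms that are \emph{not} multipliers (the failure of \'Ad\'am's conjecture), and such exotic automorphisms might permute chord lengths in ways no multiplier does. The natural tool is the theory of Schur rings over the cyclic group $\mathbb{Z}_{n}$: the automorphism group of a circulant coincides with the automorphism group of its associated S-ring, and by the Leung--Man structure theorem every S-ring over $\mathbb{Z}_{n}$ is a generalised wreath product of cyclotomic S-rings. Cyclotomic S-rings have affine automorphism groups, so on those layers the multiplier argument of Step~2 applies; a nontrivial wreath decomposition over a subgroup $H\le \mathbb{Z}_{n}$ splits the chord lengths into those lying in $H$ and those crossing the cosets, and should let one induct on $|H|$ and $n/|H|$. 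The hard part will be assembling the local product relations --- which live modulo $|H|$ and modulo $n/|H|$ --- into a single relation $a_{i}a_{j}\equiv \pm 1\pmod{n}$, since edge-transitivity of the pieces does not obviously force the unit witnessing one layer to interact correctly with the other modulus. I expect this lifting step across the wreath decomposition, rather than the primitive or cyclotomic case, to be the crux of a full proof.
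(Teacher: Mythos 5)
First, be aware that the paper itself does \emph{not} prove this statement: it is one of the two conjectures the authors explicitly leave open (the Conclusion lists proving them as future work). So there is no proof in the paper to compare yours against, and your proposal must stand on its own --- which, as you yourself concede in the last paragraph, it does not. What you do establish is correct as far as it goes: Step 1 (edge-transitivity of a circulant is equivalent to the vertex stabiliser acting transitively on $S$, using the translations and the map $x\mapsto -x$) is right; Step 2 (if the multiplier group is transitive on chord lengths, then $\pm a_{2}\in M$ and $1\in S=\pm a_{2}S$ forces $a_{2}a_{j}\equiv \pm 1 \pmod{n}$ with $j\geq 2$) is right; and the Burnside argument correctly settles the case $n=p$ prime for non-complete circulants ($K_{p}$ needs a separate, easy remark). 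But these are partial results, and the composite case is a program, not a proof.

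Second, and more seriously, the program cannot be completed as you state it, because its central claim --- that edge-transitivity of a circulant is always realised by multipliers --- is false. The wreath graph $W(5,2)\cong C_{10}(1,4)\cong C_{10}(1,6)$ is edge-transitive (it appears in the paper's appendix among the $10$-vertex graphs, and it is exactly case (2) of the Wilson--Poto\u{c}nik theorem quoted in Section 3, with $n=2m=10$), yet its multiplier group is $\{\lambda\in\mathbb{Z}_{10}^{\ast}:\lambda S=S\}=\{1,9\}=\{\pm 1\}$, which has two orbits $\{1\}$ and $\{4\}$ on chord lengths; the stabiliser of $0$ is transitive on $S=\{1,4,6,9\}$ only through non-multiplier automorphisms coming from the lexicographic (wreath) structure. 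Worse, this same graph shows that the conjecture itself, under its only non-trivial reading (indices $i,j\geq 2$; if $i=j=1$ were allowed the statement would be vacuous since $a_{1}a_{1}=1$), already fails for $m=2$: the only available product is $4\cdot 4\equiv 6\pmod{10}$, and more generally $W(n,2)\cong C_{2n}(1,n-1)$ for odd $n\geq 5$ gives $(n-1)^{2}\equiv -(n-1)\not\equiv\pm 1\pmod{2n}$. So the wreath-decomposition cases you defer to a ``hard lifting step'' are not merely hard: they are precisely where the multiplier picture breaks down, and any correct resolution must either reformulate the conjecture (e.g., restrict to $m\geq 3$, where the $W(n,2)$ family no longer applies, or exclude lexicographic-product circulants) or exhibit such graphs as counterexamples. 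Your Schur-ring/Leung--Man instinct about where the difficulty lives is sound, but the conclusion you hope to extract from it is unobtainable in the stated generality.
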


\begin{theorem}
Suppose $G=C_{n}(1,a_{2},\dots ,a_{m})$ and the set $\{1,a_{2},\dots
,a_{m},-1,-a_{2},\dots ,-a_{m}\}$ forms a group under multiplication mod $n$
(which is necessarily a subgroup of the group of units). Then $G$ is
edge-transitive.
\end{theorem}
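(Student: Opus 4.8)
The plan is to realize $G = C_n(1, a_2, \dots, a_m)$ as the Cayley graph of $\mathbb{Z}_n$ whose symmetric connection set is $H = \{1, a_2, \dots, a_m, -1, -a_2, \dots, -a_m\}$. Identifying each vertex $v_i$ with the residue $i \in \mathbb{Z}_n$, two vertices $x, y$ are then adjacent precisely when $x - y \in H$. Because $G$ is a circulant, it is already vertex-transitive via the translations $\tau_c : x \mapsto x + c$; the idea is to supplement these additive automorphisms with a family of multiplicative ones that uses the hypothesis that $H$ is a group under multiplication mod $n$.

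The key step is to observe that for any unit $u \in (\mathbb{Z}/n\mathbb{Z})^{\times}$, the map $\mu_u : x \mapsto ux$ is a graph automorphism of $G$ if and only if $uH = H$. Indeed, $\mu_u$ is a bijection of $\mathbb{Z}_n$ since $u$ is a unit, and it preserves adjacency because $x - y \in H$ holds exactly when $u(x-y) = ux - uy \in uH$; thus $\mu_u$ respects edges precisely when $uH = H$. Now apply the hypothesis. Since $H$ is a group under multiplication mod $n$ with identity $1 \in H$, every $h \in H$ is a unit (this is the assertion that $H$ is a subgroup of the group of units), and left multiplication by $h$ permutes $H$, so $hH = H$. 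Hence $\mu_h$ is a graph automorphism of $G$ for every $h \in H$.

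With both families of automorphisms available, edge-transitivity follows by a short reduction. Given an arbitrary edge $\{x, y\}$, the translation $\tau_{-x}$ carries it to $\{0, y - x\}$ with $y - x \in H$, so every edge is equivalent under $\mathrm{Aut}(G)$ to a canonical edge $\{0, h\}$ with $h \in H$. To send one canonical edge $\{0, h_1\}$ to another $\{0, h_2\}$, set $u = h_2 h_1^{-1}$, where $h_1^{-1}$ is the inverse of $h_1$ taken inside the group $H$; then $u \in H$, so $\mu_u$ is an automorphism fixing $0$ and sending $h_1 \mapsto u h_1 = h_2$. Chaining the translations that normalize two given edges with the appropriate $\mu_u$ produces an automorphism carrying any prescribed edge to any other, which is exactly edge-transitivity.

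I do not anticipate a genuine obstacle, as this is essentially the standard fact that multiplication by a connection-set element is an automorphism once the connection set is multiplicatively closed. The only points needing care are confirming that each $\mu_u$ is actually a bijection—which is where the subgroup-of-units hypothesis enters, guaranteeing $\gcd(u,n)=1$—and checking that the inverse $h_1^{-1}$ used to build $u$ is formed within $H$, so that $u$ again lies in $H$ and $\mu_u$ is known to be an automorphism. Both are immediate from the group hypothesis (note that $1 \in H$ forces the group identity to be the ring identity, so inverses in $H$ agree with inverses mod $n$), so the argument is expected to be short and essentially computation-free.
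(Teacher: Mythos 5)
Your proposal is correct and follows essentially the same route as the paper: the paper's proof likewise notes that multiplication by any element of the group $H=\{\pm 1,\pm a_{2},\dots ,\pm a_{m}\}$ permutes $H$ and hence the map $x\mapsto a_{i}x$ is an automorphism carrying chords of length $1$ to chords of length $a_{i}$, with translations handling the reduction "without loss of generality." Your write-up simply makes explicit the details the paper leaves implicit (that each $h\in H$ is a unit, that the identity of $H$ is $1$, and the translation step normalizing an arbitrary edge to $\{0,h\}$).
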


\begin{proof}
Since the chord lengths and their negatives form a group, multiplying the
group by any element of the group permutes the elements of the group. Thus,
we can send, without loss of generality, any chord of length 1 to any chord
of length $a_{i}$ by the edge map $x\mapsto a_{i}x$.
\end{proof}

\section{Conclusion}

In Theorems \ref{r1} and \ref{r2}, we investigated biregular, bipartite
graphs of the form $(r,2)$. It would be an interesting, but challenging
problem to explore the family of $(r,k)$ and determine which graphs are
edge-transitive. An additional problem would be to determine the number of
non-isomorphic graphs of this form.

In Section 3, we investigated necessary and sufficient conditions for
hexavalent ($6$-regular) graphs to be edge-transitive. In addition to
proving the two stated conjectures it would be interesting to determine
necessary and conditions for circulant graphs in general to be
edge-transitive. We note that the difficulty will surely increase as
circulants with larger number of chords are investigated, since the number
of possible groups may increase. However it may be possible to solve the
posed problem when $n$ is prime or the product of two distinct prime numbers.

\bigskip

\section{Acknowledgements}

\noindent \newline
The authors are very grateful to Stanis\l {}aw Radziszowski for useful
discussion and for processing graph data on edge transitive graphs up to 20
vertices. Research was supported by National Science Foundation Research
Experiences for Undergraduates Site Award \#1659075.

\bigskip

\pagebreak

\begin{center}
\noindent {\LARGE Appendix }
\end{center}

{\normalsize \noindent \newline
}

{\normalsize \noindent \newline
\textbf{Notation} }

\begin{itemize}
\item {\normalsize $PM$ stands for "Perfect Matching" }

\item {\normalsize $\overline G$ stands for the complement graph of $G$ }

\item {\normalsize $K_{s,s, \dots, s}$ is the complete multipartite graph
with partite sets of size $s$ }

\item {\normalsize Graphs labelled NoncayleyTransitive}${\normalsize (n,k)}$%
{\normalsize \ come from Gordon Royle's list of non-Cayley vertex-transitive
graphs at: http://staffhome.ecm.uwa.edu.au/\~{0}0013890/trans/ }
\end{itemize}

{\normalsize \noindent \newline
For graphs with fewer than six vertices, all of the graphs are either
cycles, complete graphs, or star graphs. }

{\normalsize \noindent \newline
\textbf{Edge-Transitive Graphs on 6 Vertices} \medskip }

1. {\normalsize $K_{1,5}$ }

2. {\normalsize $C_{6}$ }

3. {\normalsize $K_{2,4}$ }

4. {\normalsize $K_{3,3}$ }

5. {\normalsize $C_{6}^{2}$ }

6. {\normalsize $K_{6}$ }

{\normalsize \noindent \newline
\textbf{Edge-Transitive Graphs on 7 Vertices} \medskip }

1. {\normalsize $K_{1,6}$ }

2. {\normalsize $C_{7}$ }

3. {\normalsize $K_{2,5}$ }

4. {\normalsize $K_{3,4}$ }

5. {\normalsize $K_{7}$ }

{\normalsize \noindent \newline
\textbf{Edge-Transitive Graphs on 8 Vertices} \medskip }

1. {\normalsize $K_{8}$ }

2. {\normalsize $K_{1,7}$ }

3. {\normalsize $K_{2,6}$ }

4. {\normalsize $K_{3,5}$ }

5. {\normalsize $K_{4,4}$ }

6. {\normalsize $C_{8}$ }

7. {\normalsize $C_{8}^{3}$ }

8. {\normalsize Cube (with 6 faces and 12 edges) }

{\normalsize \noindent \newline
\textbf{Edge-Transitive Graphs on 9 Vertices} \medskip }

1. {\normalsize $K_{9}$ }

2. {\normalsize $K_{1,8}$ }

3. {\normalsize $C_{9}$ }

4. {\normalsize $K_{2,7}$ }

5. {\normalsize $K_{3,6}$ }

6. {\normalsize $K_{4,5}$ }

7. {\normalsize $C_{3}\times C_{3}$ }

8. {\normalsize $K_{3,3,3}$ }

9. {\normalsize $(4,2)$ biregular subgraph of $K_{3,6}$ }

{\normalsize \noindent \newline
\textbf{Edge-Transitive Graphs on 10 Vertices} \medskip }

1. {\normalsize $K_{1,9}$ }

2. {\normalsize $C_{1}$}$_{{\normalsize 0}}${\normalsize \ }

3. {\normalsize $(3,2)$ biregular subgraph of $K_{4,6}$ }

4. {\normalsize $Petersen$ }

5. {\normalsize $K_{2,8}$ }

6. {\normalsize $K_{5,5}-PM$ }

7. {\normalsize $Wreath(5,2)$ }

8. {\normalsize $K_{3,7}$ }

9. {\normalsize $K_{4,6}$ }

10. {\normalsize $K_{5,5}$ }

11. {\normalsize $\overline{Petersen}$ }

12 .{\normalsize $K_{10}-PM$ }

13. {\normalsize $K_{12}$ }

{\normalsize \noindent \newline
\textbf{Edge-Transitive Graphs on 11 Vertices} \medskip }

1. {\normalsize $K_{1,10}$ }

2. {\normalsize $C_{11}$ }

3. {\normalsize $K_{2,9}$ }

4. {\normalsize $K_{3,8}$ }

5. {\normalsize $K_{4,7}$ }

6. {\normalsize $K_{5,6}$ }

7. {\normalsize $K_{11}$ }

{\normalsize \noindent \newline
\textbf{Edge-Transitive Graphs on 12 Vertices} \newline
There are 19 edge-transitive graphs on 12 vertices: \noindent \newline
\newline
1 graphs : $n=12$; $e=11$; $K_{1,11}$ \newline
1 graphs : $n=12$; $e=12$; $C_{12}$ \newline
1 graphs : $n=12$; $e=16$; $(4,2)$ subgraph of $K_{4,8}$ \newline
1 graphs : $n=12$; $e=18$; $(6,2)$ subgraph of $K_{3,9}$ \newline
1 graphs : $n=12$; $e=20$; $K_{2,10}$ \newline
3 graphs : $n=12$; $e=24$; $(3,6)$ subgraph of $K_{8,4}$; Wreath(6,2);
Cuboctahedral \newline
1 graphs : $n=12$; $e=27$; $K_{3,9}$ \newline
2 graphs : $n=12$; $e=30$; $K_{6,6}-PM$; Icosahedral \newline
1 graphs : $n=12$; $e=32$; $K_{4,8}$ \newline
1 graphs : $n=12$; $e=35$; $K_{5,7}$ \newline
2 graphs : $n=12$; $e=36$; $K_{6,6}$; $\overline{K_{3}\times K_{4}}$ \newline
1 graphs : $n=12$; $e=48$; $K_{4,4,4}$ \newline
1 graphs : $n=12$; $e=54$; $K_{3,3,3,3}$ \newline
1 graphs : $n=12$; $e=60$; $K_{12}-PM$ \newline
1 graphs : $n=12$; $e=66$; $K_{12}$ }

{\normalsize \noindent }

{\normalsize \noindent \newline
\textbf{Edge-Transitive Graphs on 13 Vertices} \newline
There are 10 edge-transitive graphs on 13 vertices: \noindent \newline
\newline
1 graphs : $n=13$; $e=12$; $K_{1,12}$ \newline
1 graphs : $n=13$; $e=13$; $C_{13}$ \newline
1 graphs : $n=13$; $e=22$; $K_{2,11}$ \newline
1 graphs : $n=13$; $e=26$; $C_{13}(1,5)$ \newline
1 graphs : $n=13$; $e=30$; $K_{3,10}$ \newline
1 graphs : $n=13$; $e=36$; $K_{4,9}$ \newline
1 graphs : $n=13$; $e=39$; Paley(13) \newline
1 graphs : $n=13$; $e=40$; $K_{5,8}$ \newline
1 graphs : $n=13$; $e=42$; $K_{6,7}$ \newline
1 graphs : $n=13$; $e=78$; $K_{13}$ }

{\normalsize \noindent \newline
}

{\normalsize \noindent \textbf{Edge-Transitive Graphs on 14 Vertices} 
\newline
There are 16 edge-transitive graphs on 14 vertices: \noindent \newline
\newline
1 graphs : $n=14$; $e=13$; $K_{1,13}$ \newline
1 graphs : $n=14$; $e=14$; $C_{14}$ \newline
1 graphs : $n=14$; $e=21$; Heawood $=(3,3)$ subgraph of $K_{7,7}$ \newline
3 graphs : $n=14$; $e=24$; $K_{2,12}$; $2$ $(4,3)$ biregular subgraphs of $%
K_{6,8}$ \newline
2 graphs : $n=14$; $e=28$; Wreath(7,2); $(4,4)$ subgraph of $K_{7,7}$ 
\newline
1 graphs : $n=14$; $e=33$; $K_{3,11}$ \newline
1 graphs : $n=14$; $e=40$; $K_{4,10}$ \newline
1 graphs : $n=14$; $e=42$; $K_{7,7}-PM$ \newline
1 graphs : $n=14$; $e=45$; $K_{5,9}$ \newline
1 graphs : $n=14$; $e=48$; $K_{6,8}$ \newline
1 graphs : $n=14$; $e=49$; $K_{7,7}$ \newline
1 graphs : $n=14$; $e=84$; $K_{14}-PM$ \newline
1 graphs : $n=14$; $e=91$; $K_{14}$ }

{\normalsize \noindent \newline
\textbf{Edge-Transitive Graphs on 15 Vertices} \newline
There are 25 edge-transitive (connected) graphs on 15 vertices. \noindent 
\newline
\newline
1 graphs : $n=15$; $e=14$; $K_{1,14}$ \newline
1 graphs : $n=15$; $e=15$; $C_{15}$ \newline
1 graphs : $n=15$; $e=18$; $(3,2)$ subgraph of $K_{6,9}$ \newline
2 graphs : $n=15$; $e=20$; 2 $(4,2)$ subgraphs of $K_{5,10}$ \newline
1 graphs : $n=15$; $e=24$; $(8,2)$subgraph of $K_{3,12}$ \newline
1 graphs : $n=15$; $e=26$; $K_{2,13}$ \newline
3 graphs : $n=15$; $e=30$; $(6,3)$ subgraph of $K_{5,10}$; $L$(Petersen); $%
C_{15}(1,4)$ \newline
3 graphs : $n=15$; $e=36$; $K_{3,12}$; 2 $(6,4)$ subgraphs of $K_{6,9}$ 
\newline
1 graphs : $n=15$; $e=40$; $(8,4)$ subgraph of $K_{5,10}$ \newline
1 graphs : $n=15$; $e=44$; $K_{4,11}$ \newline
2 graphs : $n=15$; $e=45$; $(6,2)$ Kneser graph; Trpl$(C_{5})$ \newline
1 graphs : $n=15$; $e=50$; $K_{5,10}$ \newline
1 graphs : $n=15$; $e=54$; $K_{6,9}$ \newline
1 graphs : $n=15$; $e=56$; $K_{7,8}$ \newline
2 graphs : $n=15$; $e=60$; $\overline{K_{3}\times K_{5}}$; (6,2) Johnson
graph \newline
1 graphs : $n=15$; $e=75$; $K_{5,5,5}$ \newline
1 graphs : $n=15$; $e=90$; $K_{3,3,3,3,3}$ \newline
1 graphs : $n=15$; $e=105$; $K_{15}$ }

{\normalsize \noindent \newline
\textbf{Edge-Transitive Graphs on 16 Vertices} \newline
There are 26 edge-transitive (connected) graphs on 16 vertices: \noindent 
\newline
\newline
1 graphs : $n=16$; $e=15$; $K_{1,15}$ \newline
1 graphs : $n=16$; $e=16$; $C_{16}$ \newline
3 graphs : $n=16$; $e=24$; M\"{o}bius-Kantor graph; 2 $(6,2)$ subgraphs of $%
K_{4,12}$ \newline
1 graphs : $n=16$; $e=28$; $K_{2,14}$ \newline
1 graphs : $n=16$; $e=30$; $(5,3)$ subgraph of $K_{6,10}$ \newline
2 graphs : $n=16$; $e=32$; $Q_{4}$; Wreath(8,2) \newline
1 graphs : $n=16$; $e=36$; $(9,3)$ subgraph of $K_{4,12}$ \newline
1 graphs : $n=16$; $e=39$; $K_{3,13}$ \newline
1 graphs : $n=16$; $e=40$; Clebsch graph \newline
4 graphs : $n=16$; $e=48$; $K_{4,12}$; Shrikhande graph; ${K_{4}\times K_{4}}
$; $Haar(187)$ \newline
1 graphs : $n=16$; $e=55$; $K_{5,11}$ \newline
1 graphs : $n=16$; $e=56$; $K_{8,8}-PM\cong \overline{K_{8}\times K_{2}}$ 
\newline
1 graphs : $n=16$; $e=60$; $K_{6,10}$ \newline
1 graphs : $n=16$; $e=63$; $K_{7,9}$ \newline
1 graphs : $n=16$; $e=64$; $K_{8,8}$ \newline
1 graphs : $n=16$; $e=72$; $\overline{K_{4}\times K_{4}}$ \newline
1 graphs : $n=16$; $e=80$; Complement of Clebsch \newline
1 graphs : $n=16$; $e=96$; $K_{4,4,4,4}$ \newline
1 graphs : $n=16$; $e=112$; $K_{16}-PM$ \newline
1 graphs : $n=16$; $e=120$; $K_{16}$ }

{\normalsize \noindent \newline
\textbf{Edge-Transitive Graphs on 17 Vertices} \newline
There are 12 edge-transitive (connected) graphs on 17 vertices: \noindent 
\newline
\newline
1 graphs : $n=17$; $e=16$; $K_{1,16}$ \newline
1 graphs : $n=17$; $e=17$; $C_{17}$ \newline
1 graphs : $n=17$; $e=30$; $K_{2,15}$ \newline
1 graphs : $n=17$; $e=34$; $C_{17}(1,4)$ \newline
1 graphs : $n=17$; $e=42$; $K_{3,14}$ \newline
1 graphs : $n=17$; $e=52$; $K_{4,13}$ \newline
1 graphs : $n=17$; $e=60$; $K_{5,12}$ \newline
1 graphs : $n=17$; $e=66$; $K_{6,11}$ \newline
1 graphs : $n=17$; $e=68$; Paley$(17)$ \newline
1 graphs : $n=17$; $e=70$; $K_{7,10}$ \newline
1 graphs : $n=17$; $e=72$; $K_{8,9}$ \newline
1 graphs : $n=17$; $e=136$; $K_{17}$ }

{\normalsize \noindent \newline
\textbf{Edge-Transitive Graphs on 18 Vertices} \newline
There are 28 edge-transitive (connected) graphs on 18 vertices: \noindent 
\newline
\newline
1 graphs : $n=18$; $e=17$; $K_{1,17}$ \newline
1 graphs : $n=18$; $e=18$; $C_{18}$ \newline
2 graphs : $n=18$; $e=24$; 2 $(4,2)$ subgraphs of $K_{6,12}$ \newline
1 graphs : $n=18$; $e=27$; Pappus graph \newline
1 graphs : $n=18$; $e=30$; $(10,2)$ subgraph of $K_{3,15}$ \newline
1 graphs : $n=18$; $e=32$; $K_{2,16}$ \newline
3 graphs : $n=18$; $e=36$; $(6,3)$ subgraph of $K_{6,12}$; $(4,4)$ subgraph
of $K_{9,9}$, Wreath$(9,2)$ \newline
1 graphs : $n=18$; $e=45$; $K_{3,15}$ \newline
1 graphs : $n=18$; $e=48$; $(8,4)$ subgraph of $K_{6,12}$ \newline
2 graphs : $n=18$; $e=54$; 2 $(6,6)$ subgraphs of $K_{9,9}$ \newline
1 graphs : $n=18$; $e=56$; $K_{4,14}$ \newline
1 graphs : $n=18$; $e=60$; $(10,5)$ subgraph of $K_{6,12}$ \newline
1 graphs : $n=18$; $e=65$; $K_{5,13}$ \newline
3 graphs : $n=18$; $e=72$; $K_{9,9}-PM$ $\cong \overline{K_{9}\times K_{2}}$%
; $K_{6,12}$; $H_{1}$ \newline
1 graphs : $n=18$; $e=77$; $K_{7,11}$ \newline
1 graphs : $n=18$; $e=80$; $K_{8,10}$ \newline
1 graphs : $n=18$; $e=81$; $K_{9,9}$ \newline
1 graphs : $n=18$; $e=90$; $\overline{K_{6}\times K_{3}}$ \newline
1 graphs : $n=18$; $e=108$; $K_{6,6,6}$ \newline
1 graphs : $n=18$; $e=135$; $K_{3,3,3,3,3,3}$ \newline
1 graphs : $n=18$; $e=144$; $K_{18}$ - PM \newline
1 graphs : $n=18$; $e=153$; $K_{18}$ }

\FRAME{dtbpFU}{1.6682in}{1.5861in}{0pt}{\Qcb{Figure 1. The graph $H_{1}$ on
18 vertices.}}{}{Figure}{\special{language "Scientific Word";type
"GRAPHIC";maintain-aspect-ratio TRUE;display "USEDEF";valid_file "T";width
1.6682in;height 1.5861in;depth 0pt;original-width 4.6873in;original-height
4.4512in;cropleft "0";croptop "1";cropright "1";cropbottom "0";tempfilename
'OW9TAU06.wmf';tempfile-properties "XPR";}}

{\normalsize \noindent \newline
\textbf{Edge-Transitive Graphs on 19 Vertices} \newline
There are 12 edge-transitive (connected) graphs on 19 vertices: \noindent 
\newline
\newline
1 graphs : $n=19$; $e=18$; $K_{1,18}$ \newline
1 graphs : $n=19$; $e=19$; $C_{19}$ \newline
1 graphs : $n=19$; $e=34$; $K_{2,17}$ \newline
1 graphs : $n=19$; $e=48$; $K_{3,16}$ \newline
1 graphs : $n=19$; $e=57$; $C_{19}(1,7,8)$ \newline
1 graphs : $n=19$; $e=60$; $K_{4,15}$ \newline
1 graphs : $n=19$; $e=70$; $K_{5,14}$ \newline
1 graphs : $n=19$; $e=78$; $K_{6,13}$ \newline
1 graphs : $n=19$; $e=84$; $K_{7,12}$ \newline
1 graphs : $n=19$; $e=88$; $K_{8,11}$ \newline
1 graphs : $n=19$; $e=90$; $K_{9,10}$ \newline
1 graphs : $n=19$; $e=171$; $K_{19}$ }

{\normalsize \noindent \newline
\textbf{Edge-Transitive Graphs on 20 Vertices} \newline
There are 43 edge-transitive (connected) graphs on 20 vertices. \noindent 
\newline
\newline
1 graphs : $n=20$; $e=19$; $K_{1,19}$ \newline
1 graphs : $n=20$; $e=20$; $C_{20}$ \newline
1 graphs : $n=20$; $e=24$; $(3,2)$ subgraph of $K_{8,12}=$1-Menger sponge
graph \newline
3 graphs : $n=20$; $e=30$; $(6,2)$ subgraph of $K_{5,15}$; Desargues Graph;
Dodecahedral graph \newline
1 graphs : $n=20$; $e=32$; $(8,2)$ subgraph of $K_{4,16}$ \newline
1 graphs : $n=20$; $e=36$; $K_{2,18}$ \newline
4 graphs : $n=20$; $e=40$; Folkman; $Wreath(10,2)$; $Haar(525)$;
NoncayleyTransitive(20,4) \newline
5 graphs : $n=20$; $e=48$; 4 $(6,4)$ subgraphs of $K_{8,12}$; 1 $(12,3)$
subgraph of $K_{4,16}$ \newline
1 graphs : $n=20$; $e=51$; $K_{3,17}$ \newline
7 graphs : $n=20$; $e=60$; 1 $(12,4)$ subgraph of $K_{5,15}$; 2 $(6,6)$
subgraphs of $K_{10,10}$ (one is NoncayleyTransitive(20,12); $C_{20}(1,6,9)$%
; $G_{1}$; $G_{2}$; $(5,2)$-arrangement graph \newline
1 graphs : $n=20$; $e=64$; $K_{4,16}$ \newline
1 graphs : $n=20$; $e=72$; $(9,6)$ subgraph of $K_{8,12}$ \newline
1 graphs : $n=20$; $e=75$; $K_{5,15}$ \newline
2 graphs : $n=20$; $e=80$; $(8,8)$ subgraph of $K_{10,10}$; $Wreath(5,4)$ 
\newline
1 graphs : $n=20$; $e=84$; $K_{6,14}$ \newline
2 graphs : $n=20$; $e=90$; $K_{10,10}-PM$; 6-tetrahedral (Johnson) graph 
\newline
1 graphs : $n=20$; $e=91$; $K_{7,13}$ \newline
1 graphs : $n=20$; $e=96$; $K_{8,12}$ \newline
1 graphs : $n=20$; $e=99$; $K_{9,11}$ \newline
1 graphs : $n=20$; $e=100$; $K_{10,10}$ \newline
2 graphs : $n=20$; $e=120$; $\overline{K_{5}\times K_{4}}$; $G_{3}$ \newline
1 graphs : $n=20$; $e=150$; $K_{5,5,5,5}$ \newline
1 graphs : $n=20$; $e=160$; $K_{4,4,4,4,4}$ \newline
1 graphs : $n=20$; $e=180$; $K_{20}-PM$ \newline
1 graphs : $n=20$; $e=190$; $K_{20}$ }

\FRAME{dtbpFU}{4.2488in}{1.4442in}{0pt}{\Qcb{Figure 2. The graphs $G_{1}$, $%
G_{2}$, and $G_{3}$.}}{}{Figure}{\special{language "Scientific Word";type
"GRAPHIC";maintain-aspect-ratio TRUE;display "USEDEF";valid_file "T";width
4.2488in;height 1.4442in;depth 0pt;original-width 8.278in;original-height
2.7778in;cropleft "0";croptop "1";cropright "1";cropbottom "0";tempfilename
'OW9TAU07.wmf';tempfile-properties "XPR";}}

\end{document}